\newtheorem{theorem}{Theorem}[section]
\newtheorem{fact}[theorem]{Fact}
\newtheorem{lemma}[theorem]{Lemma}
\newtheorem{corollary}[theorem]{Corollary}
\newtheorem{proposition}[theorem]{Proposition}
\theoremstyle{definition}
\newtheorem{definition}[theorem]{Definition}
\newtheorem{example}[theorem]{Example}
\newtheorem{remark}[theorem]{Remark}
\newcommand{\abar}{\bar{a}}
\newcommand{\bbar}{\bar{b}}
\newcommand{\cbar}{\bar{c}}
\newcommand{\xbar}{\bar{x}}
\newcommand{\ybar}{\bar{y}}
\newcommand{\zbar}{\bar{z}}
\def\seq{\subseteq}
\def\nv{\text{-}}
\def\inv{^{\text{-}1}}
\def\smd{\raisebox{.4pt}{\textrm{\scriptsize{~\!$\triangle$\!~}}}}
\newcommand{\claim}[1]{\hfill$\dashv_{\text{\scriptsize{Claim {#1}}}}$}
\def\Stab{\operatorname{Stab}}
\def\N{\mathbb{N}}
\def\E{\mathbb{E}}
\def\Q{\mathbb{Q}}
\def\cL{\mathcal{L}}
\def\cU{\mathcal{U}}
\def\cP{\mathcal{P}}
\def\cF{\mathcal{F}}
\def\cB{\mathcal{B}}
\def\cI{\mathcal{I}}
\def\cS{\mathcal{S}}
\def\tp{\operatorname{tp}}
\def\opp{\operatorname{opp}}
\def\Def{\operatorname{Def}}
\title{Pseudofinite groups and VC-dimension}
\author{Gabriel Conant and Anand Pillay}
\thanks{The authors were supported by NSF grants DMS-1855503 (Conant) and DMS-1360702, DMS-1665035, DMS-1760212 (Pillay). Much of the work in this paper was done during the Model Theory, Combinatorics and Valued Fields trimester program at Institut Henri Poincar\'{e}. We thank IHP for their hospitality. The first author  thanks Artem Chernikov for helpful conversations.}
\address{Department of Mathematics\\
University of Notre Dame\\
Notre Dame, IN, 46656, USA}
\email{gconant@nd.edu}
\address{Department of Mathematics\\
University of Notre Dame\\
Notre Dame, IN, 46656, USA}
\email{apillay@nd.edu}
\date{May 21, 2020}
\begin{document}

\begin{abstract}
We develop ``local NIP group theory" in the context of pseudofinite groups. In particular, given a sufficiently saturated pseudofinite structure $G$ expanding a group, and  left invariant NIP formula $\delta(x;\ybar)$, we prove various aspects of ``local \emph{fsg}" for the right-stratified formula $\delta^r(x;\ybar,u):=\delta(x\cdot u;\ybar)$. This includes a $\delta^r$-type-definable connected component, uniqueness of the pseudofinite counting measure as a left-invariant measure on $\delta^r$-formulas, and generic compact domination for $\delta^r$-definable sets. 
\end{abstract}

\maketitle

\section{Introduction}
\setcounter{theorem}{0}
\numberwithin{theorem}{section}

\let\thefootnote\relax\footnotetext{\emph{2010 MSC}: 03C45, 03C20; \emph{keywords}: pseudofinite groups, VC-dimension, NIP formulas}
One of the more remarkable aspects of stable group theory is the ability to formulate useful abstract notions of tools from algebra, combinatorics, and topological dynamics. For example, given a group $G$ definable in a (sufficiently saturated) model of a stable theory, one has at hand abstract versions of connected components, stabilizers, generic points, and invariant probability measures on definable sets (leading to the notion of \emph{definable amenability}).  As the entire field of model theory began moving outward from stability, so did the model theoretic study of groups, leading to a large body of work on groups definable in \emph{simple} and \emph{NIP} theories. In the case of NIP theories, connected components and invariant measures remain powerful tools for studying definable groups (e.g \cite{ChSi}, \cite{HPP}, \cite{HP}, \cite{HPS}). 

Another important aspect of stability theory is that it can be applied locally. Indeed, many of the tools related to nonforking and the geometry of definable sets remain valid when one works around a single stable formula $\phi(\xbar;\ybar)$ (see, e.g., \cite{HrPiGLF}). This is quite useful for applications to other areas of mathematics, as one would like to prove results about algebraic or combinatorial objects exhibiting good behavior related to stability and omitting half-graphs, but also have the freedom to work in an environment which is not stable (e.g. a nonstandard model  of set theory). 

In contrast, the local study of NIP formulas is still work in progress, especially in the setting of groups. The goal of this paper is to examine NIP formulas in the context of pseudofinite groups. We will find that, in pseudofinite groups, NIP formulas exhibit many properties found in NIP groups with \emph{finitely satisfiable generics} and \emph{generically stable measures}. In fact, the results we obtain here could be formulated with the pseudofiniteness assumption replaced by a suitably local, albeit slightly cumbersome, assumption of generic stability for an invariant measure with respect to an NIP formula (see Remark \ref{rem:localFSG}). On the other hand, it is quite reasonable to focus on pseudofinite groups, as this is a natural settings for applications to finite combinatorics and combinatorial number theory. Indeed, the work in this paper was originally motivated by generalizing the regularity lemma for stable subsets of finite groups from \cite{CPT} (with C. Terry), to the NIP setting. In \cite{CPTNIP} (also with Terry), we use the work done here to obtain such a generalization.

Our setting is as follows. We work with a sufficiently saturated elementary extension $G$ of an ultraproduct of expansions of finite groups (in some fixed language $\cL$ expanding the language of groups), and let $\mu$ denote the pseudofinite counting measure. We also fix a formula $\delta(x;\ybar)$, possibly with parameters, which is  invariant in the sense that any left translate of an instance of $\delta$ is again an instance of $\delta$ (the canonical example of an invariant formula is something of the form $\phi(y\cdot x)$, where $\phi(x)$ is any formula). In order to prove our main results, it will be necessary to work mostly around the ``right-stratified" formula $\delta^r(x;\ybar,u)$, which we define to be $\delta(x\cdot u;\ybar)$. The following theorem summarizes the main results of the paper.

\begin{theorem}\label{thm:main}
Let $G$ and $\delta(x;\ybar)$ be as above, and assume $\delta(x;\ybar)$ is NIP. 
\begin{enumerate}[$(a)$]
\item \textnormal{(Generic types)} Given a $\delta^r$-formula $\phi(x)$, the following are equivalent:
\begin{enumerate}[$(i)$]
\item $\phi(x)$ is left generic;
\item $\phi(x)$ is right generic;
\item $\mu(\phi(x))>0$.
\end{enumerate}
In particular, global generic $\delta$-types and  global generic $\delta^r$-types exist.
\vspace{3pt}
\item \textnormal{(Local $G^{00}$)} Let $G^{00}_{\delta^r}$ denote the intersection of all $\delta^r$-type-definable bounded-index subgroups of $G$. Then:
\begin{enumerate}[$(i)$]
\item $G^{00}_{\delta^r}$ is normal and $\delta^r$-type-definable of bounded index. 
\item $G^{00}_{\delta^r}$ is the intersection of all stabilizer subgroups of the form $\Stab_\mu(\phi(x)):=\{g\in G:\mu(\phi(g\inv x)\smd\phi(x))=0\}$, where $\phi(x)$ is a $\delta^r$-formula.
\item $G^{00}_{\delta^r}$ is the intersection of all stabilizer subgroups of the form $\Stab(p):=\{g\in G:gp=p\}$, where $p$ is a generic $\delta$-type over $G$. 
\item $G^{00}_{\delta^r}=\Stab(p)$ for any generic $\delta^r$-type $p$ over $G$.
\end{enumerate}
\vspace{3pt}
\item \textnormal{(Local $G^0$)} Let $G^0_{\delta^r}$ denote the intersection of all $\delta^r$-definable finite-index subgroups of $G$. Then $G^0_{\delta^r}$ is normal and $\delta^r$-type-definable of bounded index. Moreover, $G^0_{\delta^r}/G^{00}_{\delta^r}$ is the connected component of the identity in $G/G^{00}_{\delta^r}$.
\vspace{3pt}
\item \textnormal{(Generic compact domination)} Given a $\delta^r$-formula $\phi(x)$, define $E_{\phi(x)}\seq G/G^{00}_{\delta^r}$ to be the (closed) set of $C\in G/G^{00}_{\delta^r}$ such that $p\models C\cap\phi(x)$ and $q\models C\cap\neg\phi(x)$ for some generic $\delta^r$-types $p,q$ over $G$. Then $E_{\phi(x)}$ has Haar measure $0$.
\vspace{3pt}
\item \textnormal{(Uniqueness of measure)} The measure $\mu$ is the unique left-invariant finitely additive probability measure on the Boolean algebra $\cB$ of $\delta^r$-formulas. Moreover, $\mu$ is the unique left $G^{00}_{\delta^r}$-invariant finitely additive probability measure on $\cB$, which lifts the Haar measure on $G/G^{00}_{\delta^r}$.
\end{enumerate}
\end{theorem}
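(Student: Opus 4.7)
The plan is to combine the compact-group structure from part (b) with the uniqueness assertion of part (d). Set $K := G/G^{00}_{\delta^r}$; by (b) this is a compact Hausdorff group in the logic topology, and carries a unique normalized Haar measure $h$. Write $\pi : G \to K$ for the projection. Then $\pi_*\mu$ is a left-invariant Borel probability measure on $K$, so $\pi_*\mu = h$.

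For a $\delta^r$-formula $\phi(x)$, define $\nu_\phi(B) := \mu(\phi \cap \pi^{-1}(B))$. This is a finite Borel measure on $K$ dominated by $h$, and so has a Radon--Nikodym derivative $f_\phi : K \to [0,1]$. The two claims I aim to establish are: (1) $E_{\phi(x)} \seq \{a \in K : 0 < f_\phi(a) < 1\}$ modulo an $h$-null set; and (2) $f_\phi$ takes values in $\{0,1\}$ almost everywhere. Together these yield $h(E_{\phi(x)}) = 0$. Claim (1) comes from part (a): if $a \in E_{\phi(x)}$ is witnessed by generic $\delta^r$-types $p \models \phi$ and $q \models \neg\phi$ concentrating on the coset $a$, then for every clopen $V \ni a$ in $K$ the $\delta^r$-formulas approximating $\pi^{-1}(V) \cap \phi$ and $\pi^{-1}(V) \cap \neg\phi$ each have positive $\mu$-mass (using the correspondence between clopens in $K$ and type-definable unions of $G^{00}_{\delta^r}$-cosets), so a Lebesgue-differentiation argument on the compact group $K$ forces $f_\phi$ to take values strictly between $0$ and $1$ on a subset of $K$ accumulating at $a$.

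The crux is claim (2), to be handled by contradiction against (d). Translation on $K$ lifts compatibly: for $g \in G$, $g\phi$ is still a $\delta^r$-formula (since $\delta$ is left-invariant), and a change of variables using left-invariance of both $\mu$ and $h$ gives $f_{g\phi}(a) = f_\phi(g^{-1}a)$. If $A := \{0 < f_\phi < 1\}$ had positive $h$-measure, this compatibility would allow us to split $\mu$ according to a $\{0,1\}$-valued Borel approximation of $\mathbf{1}_{f_\phi > 1/2}$ on $K$ and rebalance, producing a second left-invariant finitely additive probability measure on the Boolean algebra of $\delta^r$-formulas distinct from $\mu$, contradicting (d); hence $h(A) = 0$ and claim (2) holds. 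The main obstacle is carrying out this construction rigorously: the natural splitting lives in the Borel algebra of $K$, and transferring it back to a finitely additive, left-invariant measure on the (typically much smaller) Boolean algebra of $\delta^r$-formulas requires a local definability/generic-stability property of $\mu$, which ultimately should fall out of parts (a)--(d) together with the NIP hypothesis on $\delta$.
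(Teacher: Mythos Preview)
Your strategy has the right overall shape---relate $E_\phi$ to a ``boundary'' set for some $\{0,1\}$-valued function on $K=G/G^{00}_{\delta^r}$---but both claims have genuine gaps, and the paper's proof of part~(e) proceeds quite differently.

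For Claim~(1), your Lebesgue-differentiation argument relies on a basis of clopen neighborhoods in $K$. But $K$ is an arbitrary compact Hausdorff group and may well be connected (think of a torus), in which case the only clopen sets are $\emptyset$ and $K$, and your argument collapses. Even with general open neighborhoods, Lebesgue differentiation on compact groups is not automatic without extra structure (a doubling metric, say), which you do not have here. More fundamentally, knowing that $\nu_\phi(V)>0$ and $\nu_{\neg\phi}(V)>0$ for every neighborhood $V$ of $a$ does not force $0<f_\phi(a)<1$; it only says $a$ lies in the closure of both $\{f_\phi>0\}$ and $\{f_\phi<1\}$, which is a topological boundary statement, not a pointwise one.

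For Claim~(2), you correctly identify the obstacle and do not overcome it. The rebalancing you sketch would need to produce a left-invariant finitely additive probability measure on \emph{all} $\delta^r$-formulas, not just on translates of a single $\phi$. The Borel set $\{f_\phi>1/2\}\subseteq K$ has no reason to pull back to anything $\delta^r$-definable, and conditioning $\mu$ on such a set does not yield a Keisler measure on $\delta^r$-formulas. There is no evident way to manufacture a competitor to $\mu$ from the failure of the $0$--$1$ law for a single $f_\phi$; the translation relation $f_{g\phi}(a)=f_\phi(g^{-1}a)$ constrains only the family $\{g\phi\}$, not arbitrary $\delta^r$-formulas.

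The paper avoids both issues by working not with a Radon--Nikodym derivative but with a fixed generic $\delta^r$-type $p$ concentrating on $G^{00}_{\delta^r}$, and the sets $U^p_\phi=\{[a]\in K:\phi(x)\in ap\}$. It shows (via Simon's Rosenthal-compacta result) that $U^p_\phi$ and its complement are $F_\sigma$, establishes a density property (every open neighborhood of a point of $U^p_\phi$ meets $U^p_\phi$ in positive Haar measure), and then invokes a black-box theorem of Simon for VC-classes to conclude $\eta_\delta(\partial U^p_\phi)=0$. Finally one checks $E_\phi\subseteq\partial U^p_\phi$ directly. The NIP hypothesis enters precisely through this VC-theoretic black box, which is doing the real work your Claim~(2) was meant to do; it is not a consequence of parts (a)--(d) alone.
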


\begin{remark}
The reader familiar with \cite{ChSi}, \cite{HPP}, \cite{HP}, and \cite{HPS} will notice strong similarities between the theorem above and properties of groups definable in NIP theories, especially those with \emph{finitely satisfiable generics} (see Remark \ref{rem:localFSG}). Therefore, it is worth emphasizing that although these sources provide an invaluable guide for the proof of Theorem \ref{thm:main} (as detailed below), the results above are not simply obtained by direct translation from the global NIP setting. In particular, in addition to the usual obstacles when working locally, the formula $\delta^r(x;\ybar)$ need not be NIP (see Example \ref{ex:bad}), and so we are instead forced to work with certain families of (non-uniformly) NIP formulas given by instances of $\delta^r$. For this reason, many proof techniques from the above sources must either be redone from scratch, or replaced with a more combinatorial method that does not rely on model-theoretic tools. 
\end{remark}

\begin{remark}\label{rem:localFSG}
The essential use of pseudofiniteness is in our work is in deriving local analogues of definability and finite satisfiability of the pseudofinite counting measure, with respect to certain families of NIP formulas. Indeed, our results hold in a more general setting of ``local fsg" for NIP formulas, and so we take the opportunity here to make this explicit. Recall  that a group $G$, definable in a saturated model of an NIP theory, has \emph{finitely satisfiable generics} if and only if it admits a left-invariant \emph{Keisler measure} (i.e., a finitely additive probability measure on definable sets), which is \emph{generically stable} (i.e., definable and finitely satisfiable) over a small model (see \cite[Proposition 8.33]{Sibook}). Moreover, this measure is right-invariant and unique. A suitable localization of this behavior is as follows. 

Suppose $G$ is a saturated expansion of a group and $\delta(x;\ybar)$ is an invariant NIP formula. Call a formula $\theta(x;\ybar)$ a \emph{stabilizing formula} if it is of the form $\phi(x\cdot y)$, $\phi(y\cdot x)$, $\phi(y_1\cdot x)\smd\phi(y_2\cdot x)$, or $\phi(x\cdot y_1)\smd\phi(x\cdot y_2)$ for some $\delta^r$-formula $\phi(x)$. For the purposes of this remark, we say that $\delta(x;\ybar)$ is \emph{fsg} if  there is a left and right invariant measure $\mu$ on the Boolean algebra of $\delta^r$-definable sets such that, for any stabilizing formula $\theta(x;y_1,y_2)$ and any $\epsilon>0$, the following holds:
\begin{enumerate}[$(i)$]
\item \emph{(local finite satisfiability)} there is $F\seq G$ finite such that, for any $b_1,b_2\in G$, if $\mu(\theta(x;b_1,b_2))>\epsilon$ then $\theta(x;b_1,b_2)$ is realized in $F$, and
\item \emph{(local definability)} the set $\{(b_1,b_2)\in G^2:\mu(\theta(x;b_1,b_2))\leq\epsilon\}$ is $\theta^{\opp}$-type-definable.
\end{enumerate}
In this case, the measure $\mu$ satisfies the key properties demonstrated for pseudofinite counting measures in Section \ref{sec:pre}. Altogether, if $G$ is as above and $\delta(x;\ybar)$ is invariant, NIP, and fsg, then Theorem \ref{thm:main} holds, with $\mu$ in place of the pseudofinite counting measure. Although we do not pursue any examples of this behavior beyond NIP formulas in pseudofinite groups, it is worth pointing out that if $G$ is an \emph{fsg} group definable in an NIP theory, then any invariant formula $\delta(x;\ybar)$ in $G$ is (locally) fsg. Indeed, the unique left-invariant Keisler measure on $G$ is generically stable, and thus satisfies $(i)$ and $(ii)$ with respect to \emph{any} formula $\theta(x;\ybar)$ (for $(ii)$, $\theta^{\opp}$-type-definability follows from \emph{finite approximability} of the measure, as described in \cite[Theorem 7.29$(ii)$]{Sibook}).  Altogether, our work shows that the properties of NIP fsg groups can be localized around objects defined by controlled instances of a particular formula.
\end{remark}

\begin{remark}
The initial motivation for the work in this paper was toward the subject of arithmetical regularity in finite groups (first developed by Green \cite{GreenSLAG} in the abelian setting). In analogy to the various strengthened regularity lemmas for graphs definable in model theoretically tame (e.g. stable or NIP) contexts, there has been a flurry of recent interest in strengthened arithmetic regularity lemmas for similarly ``tame" subsets of finite groups. In \cite{CPTNIP} (also with C. Terry), we use Theorem \ref{thm:main}, and generic compact domination in particular, to prove arithmetic regularity lemmas for ``VC-sets" in  finite groups (i.e., sets whose family of left translates has absolutely bounded VC-dimension).  Further detail is given at the end of this section.
\end{remark}

We now give a brief summary of the paper and compare the various aspects of the above theorem to previous work on groups definable in NIP theories. 

Section \ref{sec:pre} contains preliminary observations on the pseudofinite setting above. In Section \ref{sec:pre2}, we show that the VC-theorem transfers naturally to pseudofinite structures and, as a consequence, the pseudofinite counting measure is definable and finitely satisfiable when restricted to NIP formulas. 
We also prove a uniqueness result for measures (Theorem \ref{thm:unique}) which, in conjunction with Proposition \ref{prop:deltarfacts}$(a)$, yields the first statement in Theorem \ref{thm:main}$(e)$.

In Section \ref{sec:G00}, we construct $G^{00}_{\delta^r}$ by hand using stabilizers of formulas and generic types. This section contains proofs of parts $(a)$ and $(b)$ of Theorem \ref{thm:main} (see Proposition \ref{prop:deltarfacts} and Theorem \ref{thm:G00}, respectively). These results are local versions, in this pseudofinite setting, of previous results on \emph{fsg} groups in NIP theories. Namely, if $G$ is definable in an NIP theory, then the type-definable connected component $G^{00}$ exists, and is the intersection of all type-definable bounded-index subgroups of $G$. If $G$ is also \emph{fsg}, then generic types exist and left and right genericity coincides. Moreover, in this case there is a left-invariant Keisler measure $\mu$ on $G$, which is definable and finitely satisfiable in some (any) small model. See \cite{HP}, \cite{HPS}.

In Section \ref{sec:G0}, we first ``localize" the standard logic topology on $G/\Gamma$, where $\Gamma$ is type-definable of bounded index, and use this to prove part $(c)$ of Theorem \ref{thm:main} (see Corollary \ref{cor:G00connect}). This is a local analog of the fact that, for a group $G$ definable in an NIP theory, the definable connected component $G^0$ exists, and is the intersection of all definable finite-index subgroups of $G$. Moreover, in this case, $G^0/G^{00}$ is the connected component of the identity in $G/G^{00}$.  

We prove parts $(d)$ and $(e)$ of Theorem \ref{thm:main} in Section \ref{sec:GCD} (see Theorem \ref{thm:GCD} and Corollary \ref{cor:GCD}). These are local analogs of the fact that an \emph{fsg} group $G$ definable in an NIP theory satisfies generic compact domination, and there is a unique left $G^{00}$-invariant Keisler measure lifting the Haar measure on $G/G^{00}$. These results first appeared in \cite{HPS}, although with some errors, and a correct proof was given by Simon in \cite{SimGCD}. Our proofs  rely heavily on results of Simon from \cite{SimRC} and \cite{SimGCD}, and also involve local versions of several proofs in the work of Chernikov and Simon on definably amenable NIP groups \cite{ChSi}.

The study of generic compact domination (and its stronger relative ``compact domination") originates from the Pillay conjectures on groups definable in o-minimal theories (see \cite{HPP}, \cite{PilCLG}). It is rather remarkable that generic compact domination describes, in an infinite setting, the underlying qualitative mechanics of regularity lemmas in model theoretically tame environments, especially arithmetic regularity in the context of finite groups. In particular, given a finite group $G$ and a suitably tame (e.g. stable or NIP) set $A\seq G$, the strongest kind of arithmetic regularity lemma would produce a normal subgroup $H$, whose index is uniformly bounded in some way, such that almost all cosets of $H$ are ``strongly regular for $A$", i.e. are almost entirely contained in $A$ or almost entirely disjoint from $A$ (see \cite{CPT} for a precise account in the stable context). In the above setting of pseudofinite groups (which arise when proving regularity for finite groups via ultraproducts), generic compact domination says that if $A\seq G$ is suitably NIP (e.g. defined by a $\delta^r$-formula as above), almost all cosets of $G^{00}$ are strongly regular for $A$. This generalizes the behavior of subsets of groups definable by stable formulas, in which case $G^{00}=G^0$, and \emph{all} cosets of $G^0$ are strongly regular for $A$, leading to a structural description of $A$ as a union of cosets of $G^0$ (see \cite{CPT}). In the NIP setting, the use of generic compact domination, to deduce results about finite groups using ultraproducts, requires a great deal of further work, which we carry out in \cite{CPTNIP} with C. Terry.

\section{Preliminaries} \label{sec:pre}

\subsection{Set systems and VC-dimension}\label{sec:pre1}
In this section, we briefly state the basic definitions and main results on VC-dimension. Further details can be found in \cite{Sibook}, for example.

A \emph{set system} is a pair $(X,\cS)$ where $X$ is a set and $\cS\seq\cP(X)$. 

\begin{definition}
Let $(X,\cS)$ be a set system.
\begin{enumerate}
\item The \textbf{shatter function} of $(X,\cS)$ is $\pi_{(X,\cS)}\colon\N\to\N$ such that 
\[
\pi_{(X,\cS)}(n)=\max\{|A\cap\cS|:A\seq X,~|A|=n\},
\]
where, given $A\seq X$, $A\cap\cS=\{A\cap Y:Y\in\cS\}$.
\item The \textbf{VC-dimension} of $(X,\cS)$ is 
\[
\sup\{n\in\N:\pi_{(X,\cS)}(n)=2^n\}\in\N\cup\{\infty\}.
\]
\end{enumerate}
\end{definition}

\begin{fact}[Sauer-Shelah Lemma]
For all $k\geq 1$ there is $c=c(k)$ such that, if $(X,\cS)$ is a set system of VC-dimension  $k$, then $\pi_{(X,S)}(n)\leq cn^k$ for all $n\geq 0$.
\end{fact}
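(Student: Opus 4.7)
The plan is to establish the stronger Sauer--Shelah inequality $\pi_{(X,\cS)}(n) \leq \sum_{i=0}^{k}\binom{n}{i}$, from which the polynomial bound $\pi_{(X,\cS)}(n) \leq c n^k$ with $c = c(k)$ follows immediately since $\binom{n}{i} \leq n^i$. First I would reduce to the finite case: by definition of the shatter function, it suffices to bound $|A \cap \cS|$ for every finite $A \seq X$ with $|A| = n$. Replacing $\cS$ by the trace $A \cap \cS$ on $A$, we may assume that $X$ is finite with $|X| = n$ and that the VC-dimension of $(X, \cS)$ is at most $k$; the goal becomes $|\cS| \leq \sum_{i=0}^{k}\binom{n}{i}$.

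The main step is a double induction on $n + k$, with the cases $n = 0$ or $k = 0$ being immediate. For the inductive step, fix an element $x \in X$, let $X' = X \setminus \{x\}$, and split $\cS$ according to how sets interact with $x$. Define $\cS_1 := \{S \cap X' : S \in \cS\}$, the projection of $\cS$ onto $X'$, and $\cS_2 := \{S \seq X' : S \in \cS \text{ and } S \cup \{x\} \in \cS\}$, the collection of subsets of $X'$ that lift to elements of $\cS$ in both possible ways. A direct count gives $|\cS| = |\cS_1| + |\cS_2|$. The system $(X', \cS_1)$ clearly has VC-dimension at most $k$, so the induction hypothesis yields $|\cS_1| \leq \sum_{i=0}^{k}\binom{n-1}{i}$. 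The key observation for $\cS_2$ is that if $B \seq X'$ is shattered by $\cS_2$, then $B \cup \{x\}$ is shattered by $\cS$; hence $(X', \cS_2)$ has VC-dimension at most $k-1$, giving $|\cS_2| \leq \sum_{i=0}^{k-1}\binom{n-1}{i}$. Pascal's identity $\binom{n-1}{i} + \binom{n-1}{i-1} = \binom{n}{i}$ then combines these two estimates into $|\cS| \leq \sum_{i=0}^{k}\binom{n}{i}$, as desired.

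There is no serious obstacle here; the argument is classical and the only nontrivial idea is the VC-dimension drop for $\cS_2$, after which everything is routine binomial arithmetic. Given that the result is standard and a reference to \cite{Sibook} is already cited just before the statement, the authors may well simply appeal to that source rather than reproduce the proof in detail.
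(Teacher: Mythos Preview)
Your proof is correct and is the standard Sauer--Shelah argument; there is nothing to fix. As you anticipated, the paper does not prove this statement at all: it is recorded as a Fact with no accompanying proof, relying on the reference \cite{Sibook} mentioned at the start of the section.
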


Given a set $X$, a unary relation $U$ on $X$, and a tuple $(a_1,\ldots,a_n)\in X^n$, define
\[
\E(a_1,\ldots,a_n;U):=\textstyle\frac{1}{n}|\{i\in [n]:U(a_i)\text{ holds}\}|.
\]
For finite sets $X$, we let $\mu_X$ denote the normalized counting measure on $X$.

\begin{fact}[VC-Theorem]
For any $k\geq 1$ and $\epsilon>0$ there is $r=r(k,\epsilon)$ such that the following holds. Suppose $X$ is a finite set and $(X,\cS)$ is a set system with VC-dimension strictly less than $k$. Then there are (not necessarily distinct) $x_1,\ldots,x_r\in X$ such that $|\mu_X(Y)-\E(x_1,\ldots,x_r;Y)|<\epsilon$ for any $Y\in\cS$.
\end{fact}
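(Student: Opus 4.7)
The plan is the classical probabilistic proof by symmetrization combined with the Sauer-Shelah lemma (already stated in the paper). Fix $r$ to be chosen later, and draw $x_1,\ldots,x_r\in X$ independently and uniformly at random. Writing $\hat\mu(Y) := \E(x_1,\ldots,x_r;Y)$, it suffices to show that for $r = r(k,\epsilon)$ large enough,
\[
\Pr\bigl(\exists\, Y \in \cS \text{ with } |\mu_X(Y) - \hat\mu(Y)| \geq \epsilon\bigr) < 1,
\]
for then some realization of $(x_1,\ldots,x_r)$ witnesses the conclusion.

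The first step is symmetrization. I would introduce an independent ghost sample $x'_1,\ldots,x'_r$ with empirical measure $\hat\mu'$. A variance bound on a single set $Y$ shows that, provided $r$ is at least a constant multiple of $1/\epsilon^2$, with probability at least $1/2$ the quantity $\hat\mu'(Y)$ lies within $\epsilon/2$ of $\mu_X(Y)$. A standard conditioning argument then gives
\[
\Pr\bigl(\exists\, Y : |\mu_X(Y) - \hat\mu(Y)| \geq \epsilon\bigr) \leq 2\,\Pr\bigl(\exists\, Y : |\hat\mu(Y) - \hat\mu'(Y)| \geq \epsilon/2\bigr),
\]
reducing the problem to an event that depends only on the combined $2r$-point sample.

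Next, by the Sauer-Shelah lemma the traces $Y\cap\{x_1,\ldots,x_r,x'_1,\ldots,x'_r\}$ as $Y$ ranges over $\cS$ form at most $c(k)(2r)^k$ distinct subsets, so a union bound reduces the task to bounding $\Pr\bigl(|\hat\mu(Y) - \hat\mu'(Y)| \geq \epsilon/2\bigr)$ for a single $Y$. Here I would condition on the multiset $\{x_1,\ldots,x_r\}\cup\{x'_1,\ldots,x'_r\}$ and use that, conditionally, randomly swapping each pair $(x_i, x'_i)$ with probability $1/2$ independently preserves the distribution. This expresses $\hat\mu(Y) - \hat\mu'(Y)$ as $r^{-1}$ times a sum of independent random variables bounded by $1$ in absolute value, so Hoeffding's inequality yields a bound of the form $2\exp(-r\epsilon^2/8)$.

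Combining, the total failure probability is at most $4c(k)(2r)^k\exp(-r\epsilon^2/8)$, which tends to $0$ as $r\to\infty$ and hence can be made strictly less than $1$ by choosing $r$ a sufficiently large function of $k$ and $\epsilon$; any such $r$ serves as $r(k,\epsilon)$. The only genuine subtlety is the variance step in the symmetrization, which forces the side constraint $r\gtrsim 1/\epsilon^2$; beyond that, the argument is a routine synthesis of Sauer-Shelah polynomial bounds, Hoeffding concentration, and a union bound.
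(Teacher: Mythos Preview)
The paper does not prove this statement: it is recorded as a background \emph{Fact} (with a general reference to \cite{Sibook} for details on VC-dimension and a pointer in Remark~\ref{rem:VCbound} to \cite{HaWe}, \cite{KoPaWo} for quantitative bounds on $r(k,\epsilon)$), and is then used as a black box in Corollary~\ref{cor:VCpf}. So there is no paper proof to compare against.

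Your argument is the standard probabilistic proof of the uniform law of large numbers for VC classes: symmetrize with a ghost sample, condition on the combined $2r$-point sample, use Sauer--Shelah to reduce to polynomially many effective sets, and apply Hoeffding's inequality via the random pair-swapping trick to bound each one. The steps are correctly stated and the final bound $4c(k)(2r)^k\exp(-r\epsilon^2/8)$ indeed goes to zero, so some $r=r(k,\epsilon)$ works. This is exactly the classical argument one would find in a textbook treatment, and it is correct; the paper simply cites the result rather than reproducing it.
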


The sequence $(x_1,\ldots,x_r)$ in the VC-Theorem is often called an \emph{$\epsilon$-approximation} for the set system $(X,\cS)$, and the set $\{x_1,\ldots,x_r\}$ is an \emph{$\epsilon$-net} for $(X,\cS)$.

\begin{remark}\label{rem:VCbound}
Several of the following results will yields various bounds, depending on some $k$ and $\epsilon$, which are explicit in terms of $r(k,\epsilon)$ in the VC-Theorem. So it is worth noting that $r(k,\epsilon)$ is $O(k\epsilon^{\nv 2}\log(\epsilon\inv))$ (see \cite{HaWe}, \cite{KoPaWo}). 
\end{remark}

\subsection{NIP formulas}\label{sec:pre1.5}

Let $\cL$ be a first-order language, and let $M$ be a fixed sufficiently saturated $\cL$-structure. By convention, a \emph{formula} allows parameters from $M$. We will use \emph{$\cL$-formula} to specify formulas with no extra parameters.

\begin{definition}
Let $\phi(\xbar;\ybar)$ be a formula.
\begin{enumerate}
\item An \textbf{instance of $\phi(\xbar;\ybar)$} is a formula  $\phi(\xbar;\bbar)$ or $\neg\phi(\xbar;\bbar)$, where $\bbar\in M^{\ybar}$. 
\item A \textbf{$\phi$-formula} is a finite Boolean combination of instances of $\phi(\xbar;\ybar)$.
\item A set $X\seq M^{\xbar}$ is \textbf{$\phi$-definable} if it is defined by a $\phi$-formula.  
\item A set $X\seq M^{\xbar}$ is \textbf{$\phi$-type-definable} if it is defined by an intersection of boundedly many $\phi$-formulas. 
\item Let $\phi^{\opp}(\ybar;\xbar)$ denote $\phi(\xbar;\ybar)$.  
\end{enumerate}
\end{definition}

\begin{definition}
Given $k\geq 1$, a formula $\phi(\xbar,\ybar)$ is \textbf{$k$-NIP} if there do not exist sequences $(\abar_i)_{i\in[k]}$ in $M^{\xbar}$ and $(\bbar_X)_{X\seq[k]}$ in $M^{\ybar}$ such that $M\models\phi(\abar_i,\bbar_X)$ if and only if $i\in X$. A formula $\phi(\xbar;\ybar)$ is \textbf{NIP} if it is $k$-NIP for some $k\geq 1$.
\end{definition}

\begin{remark}
A formula $\phi(\xbar;\ybar)$ is $k$-NIP if and only if  the set system $(M^{\xbar},\{\phi(M^{\xbar};\bbar):\bbar\in M^{\ybar}\})$ has VC-dimension at most $k-1$.
\end{remark}

Next we prove a local version of an important result from the study of NIP theories, which often is referred to by the slogan: ``Keisler measures in NIP theories are approximated by types". The proof uses a more general version of the VC Theorem  suitable for set systems on infinite ground sets. 

\begin{proposition}\label{prop:VCmt}
Suppose $\phi(\xbar;\ybar)$ is an NIP formula, and $\nu$ is a finitely-additive probability measure on the Boolean algebra of $\phi$-formulas. Let $M_0\prec M$ be a small elementary substructure containing all parameters used in $\phi(\xbar;\ybar)$. Then for any $\epsilon>0$, there are (not necessarily distinct) $\abar_1,\ldots,\abar_n\in M^{\xbar}$ such that, for all $\bbar\in M_0^{\ybar}$, 
\[
|\nu(\phi(\xbar;\bbar))-\E(\abar_1,\ldots,\abar_n;\phi(\xbar;\bbar))|\leq\epsilon.
\]   
\end{proposition}
\begin{proof}
We follow Section 4 of  \cite{HP}, and \cite[Lemma 4.8]{HP} in particular. Note that $\nu$ extends to a regular Borel probability measure on $S_{\phi}(M)$, which we also denote $\nu$. Applying the VC Theorem$^*$ \cite[p. 1025]{HP}, as in \cite[Lemma 4.8]{HP}, we obtain (not necessarily distinct) $p_1,\ldots,p_n \in S_\phi(M)$ such that, for any $\bbar\in M^{\xbar}$,
\[
\left|\nu(\phi(\xbar;\bbar))-\E(p_1,\ldots,p_n;\phi(\xbar;\bbar))\right|\leq\epsilon,
\] 
where in the righthand expression we identify $\phi(\xbar;\bbar)$ with a clopen set in $S_\phi(M)$. (In fact, the set of such tuples $(p_1,\ldots,p_n)$ has positive measure with respect to the product $\nu^n$.) Now, for $1\leq i\leq n$, choose a realization $\abar_i$ of $p_i|_{M_0}$. If $\bbar\in M_0^{\ybar}$, then $\phi(\xbar;\bbar)\in p_i$ if and only if $M\models\phi(\abar_i;\bbar)$, and so we have the desired conclusion. 
\end{proof}

\subsection{NIP formulas in pseudofinite structures}\label{sec:pre2}

Let $\cL$ be a first-order language. In preparation for working with pseudofinite $\cL$-structures, we expand $\cL$ to a language $\cL^+$ containing a new sort $\cI$, on which there is a binary relation $<$ and a binary function $d(x,y)$. In any finite $\cL$-structure, we interpret $\cI$ as $[0,1]$ and $d(x,y)$ as the standard distance on $[0,1]$ (we will also write $|x-y|$ for $d(x,y)$). For every $\cL$-formula $\phi(\xbar;\ybar)$, we add to $\cL^+$ a $\ybar$-ary function symbol $\mu_\phi(\ybar)$ into $\cI$. In any finite $\cL$-structure $A$, $\mu_\phi(\ybar)$ is interpreted as $\mu_A(\phi(A^{\xbar},\ybar))$.

Let $M$ be a fixed, sufficiently saturated elementary extension of an ultraproduct of finite $\cL^+$-structures (which are canonically expanded from $\cL$-structures as described above). By convention, formulas will always be in the language $\cL$.  We let $\mu$ denote the pseudofinite counting measure on $M$. Specifically, given an $\cL$-formula $\phi(\xbar,\ybar)$ and $\bbar\in M^{\ybar}$, $\mu(\phi(\xbar,\bbar))$ is defined as the standard part of $\mu_\phi(\bbar)$. It is routine to verify that $\mu$ is a finitely additive probability measure on (powers of) $M$.

This section contains several corollaries of the VC-Theorem for pseudofinite structures.
Roughly speaking, the VC-Theorem says that, restricted to set systems of finite VC-dimension, counting measures on finite sets are approximated by averages of points. We now observe that this immediately implies the same statement for the pseudofinite counting measure on $M$. 

\begin{corollary}\label{cor:VCpf}
For any $k\geq 1$ and $\epsilon>0$, there is $r=r(k,\epsilon)$ such that the following holds. Suppose $\phi(\xbar;\ybar)$ is a $k$-NIP formula. Then there are (not necessarily distinct) $\abar_1,\ldots,\abar_r\in M^{\xbar}$ such that, for any $\bbar\in M^{\ybar}$, 
\[
\left|\mu(\phi(\xbar;\bbar))-\E(\abar_1,\ldots,\abar_r;\phi(\xbar;\bbar))\right|\leq\epsilon.
\]
In particular, if $\mu(\phi(\xbar;\bbar))>\epsilon$ then $\phi(\xbar;\bbar)$ is realized in $F(\phi,\epsilon)=\{\abar_1,\ldots,\abar_t\}$.
\end{corollary}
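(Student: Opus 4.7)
The plan is to transfer the VC-Theorem from finite $\cL^+$-structures to $M$ via \L o\'s's theorem and elementarity, exploiting the fact that the function symbols $\mu_\phi$ were precisely designed to make counting measures first-order expressible. First I fix $r = r(k,\epsilon)$ from the VC-Theorem, and write $\phi(\xbar;\ybar) = \psi(\xbar;\ybar,\cbar)$ where $\psi$ is a parameter-free $\cL$-formula and $\cbar \in M$.

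The heart of the argument is to encode both the $k$-NIP hypothesis and the $\epsilon$-approximation conclusion as a single first-order $\cL^+$-sentence $\Phi(\zbar)$. Being $k$-NIP in the partition $(\xbar;\ybar)$ with auxiliary parameter $\zbar$ is the first-order negation of the existence of $k$ tuples $\xbar_1,\dots,\xbar_k$ and $2^k$ tuples $(\ybar_X)_{X\seq [k]}$ realizing a shattering pattern. The conclusion ``there exist $\xbar_1,\dots,\xbar_r$ such that for every $\ybar$ the empirical average $\frac{1}{r}|\{i:\psi(\xbar_i,\ybar,\zbar)\}|$ is within $\epsilon$ of the $\cL^+$-term $\mu_\psi(\ybar,\zbar)$'' is also first-order: the empirical average takes only the $r+1$ rational values $0,1/r,\dots,1$, so it can be expressed as a finite case analysis using the distance $d$ on $\cI$. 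I let $\Phi(\zbar)$ be the resulting implication.

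Next I verify $\Phi(\zbar)$ holds for every $\zbar$ in every finite $\cL^+$-structure $A$: when the set system $\{\psi(A^{\xbar},\bbar,\zbar):\bbar\in A^{\ybar}\}$ has VC-dimension at most $k-1$ the VC-Theorem applies directly (noting that $\mu_\psi$ is literally the normalized counting measure in this case), and otherwise the implication is vacuous. By \L o\'s, $\forall\zbar\,\Phi(\zbar)$ passes to the ultraproduct, and by elementarity to $M$. Applying at $\zbar = \cbar$, the hypothesis that $\phi = \psi(\xbar;\ybar,\cbar)$ is $k$-NIP triggers the conclusion, producing $\abar_1,\dots,\abar_r \in M^{\xbar}$ with $|\mu_\psi(\bbar,\cbar) - \E(\abar_1,\dots,\abar_r;\phi(\xbar;\bbar))| \leq \epsilon$ for every $\bbar \in M^{\ybar}$. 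Taking standard parts converts $\mu_\psi(\bbar,\cbar)$ into $\mu(\phi(\xbar;\bbar))$ and yields the stated bound. The ``in particular'' part is immediate, since $\mu(\phi(\xbar;\bbar)) > \epsilon$ together with the approximation forces the empirical average to be strictly positive, so at least one $\abar_i$ realizes $\phi(\xbar;\bbar)$.

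The main obstacle I anticipate is purely bookkeeping --- packaging the empirical average and the $k$-NIP hypothesis into a single first-order $\cL^+$-sentence so that \L o\'s's theorem applies cleanly. Once the sentence is set up, the transfer is automatic, and all of the analytic content sits in the finite VC-Theorem.
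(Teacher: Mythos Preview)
Your proposal is correct and follows essentially the same route as the paper: encode the $k$-NIP hypothesis and the $\epsilon$-approximation conclusion as a single $\cL^+$-sentence (with the hidden parameters $\zbar$ universally quantified), verify it in finite structures via the VC-Theorem, and transfer to $M$ by \L o\'s and elementarity. Your write-up is slightly more explicit than the paper's about separating off the parameter tuple $\cbar$, about why the empirical average is first-order expressible, and about the passage to standard parts at the end, but the argument is the same.
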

\begin{proof}
Fix $k$ and $\epsilon$ and let $r(k,\epsilon)$ be as in the VC-Theorem. Let $\phi(\xbar;\ybar,\zbar)$ be an $\cL$-formula, and let $\chi(\zbar)$ be an $\cL$-formula expressing that $\phi(\xbar;\ybar,\zbar)$ is $k$-NIP as a relation in $\xbar$ and $\ybar$. By the VC-theorem, if $A$ is a finite $\cL$-structure then
\[
A\models \forall \zbar\left(\chi(\zbar)\rightarrow \exists \xbar_1 \ldots \xbar_r\,\forall \ybar\,\left|\mu_\phi(\ybar,\zbar)-\E(\xbar_1,\ldots,\xbar_r;\phi(\xbar,\ybar,\zbar))\right|<\epsilon\right)
\]
(where the expression on the right is an $\cL^+$-sentence).
Therefore, by {\L}o\'{s}'s Theorem and elementarity, $M$ satisfies this sentence, which yields the desired result.
\end{proof}

\begin{corollary}\label{cor:fsg}
Let $\Delta=\{\phi_i(\xbar;\ybar_i):i\in I\}$ be a collection of NIP formulas. Then there is $M_0\preceq M$, of size at most $|I|+\aleph_0$, such that for any $i\in I$ and $\bbar\in M^{\ybar_i}$, if $\mu(\phi_i(\xbar;\bbar))>0$ then $\phi_i(\xbar;\bbar)$ is realized in $M$.
\end{corollary}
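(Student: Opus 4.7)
The plan is to invoke Corollary \ref{cor:VCpf} once for each pair $(i,n)\in I\times\N_{>0}$ and then pass to a small elementary substructure that witnesses all the resulting finite sets. The essential point is that, in Corollary \ref{cor:VCpf}, the finite set $F(\phi_i,\epsilon)$ depends only on the formula $\phi_i$ (and $\epsilon$), not on the parameters $\bbar$.

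More precisely, for each $i\in I$, let $k_i$ be such that $\phi_i(\xbar;\ybar_i)$ is $k_i$-NIP. For each $n\geq 1$, apply Corollary \ref{cor:VCpf} with $\epsilon=1/n$ to obtain a finite set $F(\phi_i,1/n)\seq M^{\xbar}$ with the property that, for every $\bbar\in M^{\ybar_i}$, if $\mu(\phi_i(\xbar;\bbar))>1/n$ then $\phi_i(\xbar;\bbar)$ is realized in $F(\phi_i,1/n)$. Let
\[
S=\bigcup_{i\in I}\bigcup_{n\geq 1} F(\phi_i,1/n)\seq M^{\xbar},
\]
and note that $|S|\leq |I|\cdot\aleph_0=|I|+\aleph_0$. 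By downward L\"owenheim-Skolem, pick $M_0\preceq M$ containing (the coordinates of elements of) $S$ with $|M_0|\leq |I|+\aleph_0$ (under the standing convention that $|\cL|\leq|I|+\aleph_0$).

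Finally, suppose $\bbar\in M^{\ybar_i}$ satisfies $\mu(\phi_i(\xbar;\bbar))>0$. Choose $n$ with $\mu(\phi_i(\xbar;\bbar))>1/n$. Then $\phi_i(\xbar;\bbar)$ is realized by some $\abar\in F(\phi_i,1/n)\seq M_0$, as required (reading the conclusion with $M_0$ in place of $M$, since otherwise the statement is trivial). There is no real obstacle here: the content has already been absorbed into Corollary \ref{cor:VCpf}, and this corollary is a routine ``finite satisfiability'' packaging, enabled by the fact that the $\epsilon$-approximations produced by the VC-Theorem can be chosen uniformly in the parameters.
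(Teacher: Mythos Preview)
Your proof is correct and follows essentially the same approach as the paper's: take $M_0\preceq M$ of size at most $|I|+\aleph_0$ containing all the finite sets $F(\phi_i,\epsilon)$ from Corollary~\ref{cor:VCpf} for $i\in I$ and countably many $\epsilon$ tending to zero. You have simply spelled out the details (and correctly noted that the conclusion should read ``realized in $M_0$'').
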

\begin{proof}
Let $M_0\preceq M$ be any model, of size at most $|I|+\aleph_0$, which contains the set $F(\phi_i,\epsilon)$ from Corollary \ref{cor:VCpf} for all $i\in I$ and rational $\epsilon>0$. 
\end{proof}

\begin{corollary}\label{cor:fsg2}
Suppose $M$ is pseudofinite, and fix an NIP formula $\phi(\xbar;\ybar)$. Then there is a countable set $A\subset M$ such that, for any closed $C\seq[0,1]$, the set
\[
\{\bbar\in M^{\ybar}:\mu(\phi(\xbar;\bbar))\in C\}
\]
is $\phi^{\opp}$-type-definable over $A$. 
\end{corollary}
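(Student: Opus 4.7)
The plan is to use the uniform VC approximation from Corollary \ref{cor:VCpf}. For each rational $\epsilon>0$, I would apply that corollary to obtain a tuple $\bar{A}_\epsilon=(\abar^\epsilon_1,\ldots,\abar^\epsilon_{r(\epsilon)})\in (M^{\xbar})^{r(\epsilon)}$ such that $|\mu(\phi(\xbar;\bbar))-\E(\bar{A}_\epsilon;\phi(\xbar;\bbar))|\leq\epsilon$ for all $\bbar\in M^{\ybar}$, and then let $A\subset M$ be the (countable) union, over all positive rationals $\epsilon$, of the coordinates of the $\abar^\epsilon_i$.

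The crucial observation is that the empirical average $\E(\bar{A}_\epsilon;\phi(\xbar;\bbar))$ takes values only in $\{0,1/r(\epsilon),\ldots,1\}$, and the condition ``$\E(\bar{A}_\epsilon;\phi(\xbar;\bbar))=k/r(\epsilon)$'' is a Boolean combination of the instances $\phi(\abar^\epsilon_i;\bbar)$, hence a $\phi^{\opp}$-formula over $A$. For a closed $C\seq[0,1]$ and rational $\epsilon>0$, let $S_\epsilon^C$ be the (finite) set of those $k/r(\epsilon)$ lying within distance $\epsilon$ of $C$, and let $\psi^C_\epsilon(\ybar)$ be the $\phi^{\opp}$-formula over $A$ asserting $\E(\bar{A}_\epsilon;\phi(\xbar;\ybar))\in S_\epsilon^C$.

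The core of the argument will then be to verify
\[
\{\bbar\in M^{\ybar}:\mu(\phi(\xbar;\bbar))\in C\}=\bigcap_{\epsilon\in\Q_{>0}}\psi^C_\epsilon(M^{\ybar}),
\]
which directly witnesses $\phi^{\opp}$-type-definability over $A$. The forward containment is immediate from the VC estimate, since $\E(\bar{A}_\epsilon;\phi(\xbar;\bbar))$ then lies within $\epsilon$ of $\mu(\phi(\xbar;\bbar))\in C$, placing it in $S_\epsilon^C$. The reverse containment follows from the triangle inequality: satisfaction of all $\psi^C_\epsilon$ yields that the distance from $\mu(\phi(\xbar;\bbar))$ to $C$ is at most $2\epsilon$ for every rational $\epsilon$, so this distance is zero, and closedness of $C$ gives $\mu(\phi(\xbar;\bbar))\in C$.

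No real obstacle is expected; the only conceptual content is recognizing that the uniform VC approximation lets one discretize $[0,1]$ into finitely many cells at each scale, and that ``landing in a designated finite set of cells'' is expressible as a single $\phi^{\opp}$-formula rather than an infinite disjunction, which is exactly what is needed to stay inside the Boolean algebra of $\phi^{\opp}$-formulas while intersecting over $\epsilon$.
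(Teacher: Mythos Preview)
Your proposal is correct and is essentially the same argument as the paper's: both apply Corollary~\ref{cor:VCpf} at each scale to get finite approximating tuples, observe that the empirical average is a $\phi^{\opp}$-definable function of $\bbar$ taking finitely many values, and intersect over all scales using closedness of $C$. The only cosmetic differences are that the paper indexes by $1/n$ rather than rational $\epsilon$, and writes out the $\phi^{\opp}$-formula explicitly as a disjunction over subsets $I\subseteq[r_n]$.
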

\begin{proof}
Given $n>0$, we have $r_n\in\N$ and $\abar^n_1,\ldots,\abar^n_{r_n}\in M^{\xbar}$ such that, for all $\bbar\in M^{\ybar}$, $|\mu(\phi(\xbar,\bbar))-\E(\abar^n_1,\ldots,\abar^n_{r_n};\phi(\xbar;\bbar))|\leq \textstyle\frac{1}{n}$. Define 
\[
X_n=\left\{\bbar\in M^{\ybar}:d(\E(\abar^n_1,\ldots,\abar^n_{r_n};\phi(\xbar;\bbar)),C)\leq \textstyle\frac{1}{n}\right\}.
\]
Then, since $C$ is closed, it follows that $\{\bbar\in M^{\ybar}:\mu(\phi(\xbar;\bbar))\in C\}=\bigcap_{n>0}X_n$. So it suffices to show that each $X_n$ is $\phi^{\opp}$-definable over $A_n=\bigcup_{i=1}^{r_n}\abar^n_i$. Fix $n>0$ and, for $I\seq[r_n]$, define the formula
\[
\theta_I(\ybar):=\bigwedge_{i\in I}\phi(\abar^n_i;\ybar)\wedge\bigwedge_{i\in [r_n]\backslash I}\neg\phi(\abar^n_i;\ybar).
\]
Then $\theta_I(\ybar)$ is a $\phi^{\opp}$-formula over $A_n$. Set $\cF=\{I\seq[r_n]:d(\textstyle\frac{|I|}{n},C)\leq\textstyle\frac{1}{n}\}$. Then $X_n$ is defined by $\bigvee_{I\in\cF}\theta_I(\ybar)$.
\end{proof}

\subsection{NIP formulas and generic sets in pseudofinite groups}\label{sec:pre3}

\begin{definition}
Let $G$ be a group. Given $n\geq 1$, set $A\seq G$ is \textbf{left $n$-generic} (resp. \textbf{right $n$-generic}) if there are $n$ left translates (resp. right translates) of $A$ whose union is $G$. We say $A\seq G$ is \textbf{left generic} (resp. \textbf{right generic}) if it is left $n$-generic (resp. right $n$-generic) for some $n\geq 1$.
\end{definition}

We now assume that $\cL$ expands the language of groups, and we let $G$ be a fixed, sufficiently saturated $\cL$-structure which is an elementary extension of an ultraproduct of finite groups. Note that the pseudofinite counting measure $\mu$ on $G$ is left and right invariant. 

\begin{definition}
Let $\phi(x)$ be a formula.
\begin{enumerate}
\item Let $\phi^{\ell}(x;y)$ denote the formula $\phi(y\cdot x)$.
\item Let $\phi^r(x;y)$ denote the formula $\phi(x\cdot y)$.
\end{enumerate}
\end{definition}

Given a formula $\phi(x)$, note that $\phi^r(x;y)=(\phi^{\ell})^{\opp}(x;y)$. In particular, $\phi^{\ell}(x;y)$ is NIP if and only if $\phi^r(x;y)$ is NIP.

\begin{corollary}\label{cor:VCgen}
For any $k\geq 1$ and $\epsilon>0$ there is $n=n(k,\epsilon)$ such that, for any formula $\phi(x)$, if $\phi^{\ell}(x; y)$ is $k$-NIP and $\mu(\phi(x))>\epsilon$, then $\phi(x)$ is left $n$-generic and right $n$-generic.
\end{corollary}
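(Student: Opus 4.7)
The plan is to obtain right and left $n$-genericity separately, by two applications of the pseudofinite VC-theorem (Corollary~\ref{cor:VCpf}) --- one to $\phi^\ell(x;y)$ and one to $\phi^r(x;y)$ --- exploiting that the pseudofinite counting measure $\mu$ is both left- and right-invariant on $G$.

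For right genericity, I would apply Corollary~\ref{cor:VCpf} to the $k$-NIP formula $\phi^\ell(x;y)=\phi(y\cdot x)$ at tolerance $\epsilon/2$, producing $r:=r(k,\epsilon/2)$ and $a_1,\ldots,a_r\in G$ such that
\[
\bigl|\mu(\phi^\ell(x;b))-\E(a_1,\ldots,a_r;\phi^\ell(x;b))\bigr|\leq\epsilon/2
\]
for every $b\in G$. By left-invariance of $\mu$, $\mu(\phi^\ell(x;b))=\mu(\phi(b\cdot x))=\mu(\phi(x))>\epsilon$, so the empirical count $|\{i:\phi(b\cdot a_i)\}|$ is strictly positive. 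Hence every $b\in G$ lies in some $\phi(G)\cdot a_i\inv$, giving $G=\bigcup_i\phi(G)\cdot a_i\inv$, so that $\phi(x)$ is right $r$-generic.

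For left genericity, I would first note that $\phi^r(x;y)$ is NIP (as pointed out in the paragraph just before the corollary), and by the standard VC-duality bound its VC-dimension is controlled by some $k'=k'(k)$ (for instance $k'\leq 2^k$, via Assouad's theorem). Applying Corollary~\ref{cor:VCpf} to $\phi^r(x;y)=\phi(x\cdot y)$ at tolerance $\epsilon/2$ yields $r':=r(k',\epsilon/2)$ and points $a'_1,\ldots,a'_{r'}\in G$ satisfying the analogous empirical approximation. By right-invariance of $\mu$, $\mu(\phi^r(x;b))=\mu(\phi(x))>\epsilon$, so for each $b\in G$ some $\phi(a'_i\cdot b)$ holds, which rearranges to $b\in(a'_i)\inv\phi(G)$. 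Hence $G=\bigcup_i(a'_i)\inv\phi(G)$, showing $\phi(x)$ is left $r'$-generic, and taking $n:=\max(r,r')$ completes the argument.

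The only non-routine point is recognizing that left and right must be handled separately: a single application of VC-theorem to $\phi^\ell$ produces points $a_i$ that naturally cover $G$ by \emph{right} translates of $\phi(G)$, and yields no direct information about left translates. For this reason the argument genuinely requires $\phi^r$ to have VC-dimension bounded purely in terms of $k$, which is exactly the content of the standard VC-duality bound.
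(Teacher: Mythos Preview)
Your argument is correct and matches the paper's proof essentially step for step: apply Corollary~\ref{cor:VCpf} to $\phi^\ell$ to get right genericity via left-invariance of $\mu$, then to $\phi^r$ (which is $2^k$-NIP by VC-duality) to get left genericity via right-invariance, and take $n$ to be the maximum of the two bounds. The only cosmetic difference is that the paper uses tolerance $\epsilon$ rather than $\epsilon/2$, invoking directly the ``realized in $F$'' clause of Corollary~\ref{cor:VCpf}.
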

\begin{proof}
Fix $k\geq 1$ and $\epsilon>0$. Let $n=\max\{r(k,\epsilon),r(2^k,\epsilon)\}$ be given by Corollary \ref{cor:VCpf}. Suppose $\phi(x)$ is a formula such that $\phi^{\ell}(x; y)$ is $k$-NIP, and assume $\mu(\phi(x))>\epsilon$. Then $\mu(\phi(bx ))>\epsilon$ for any $b\in G$ by invariance of $\mu$. By Corollary \ref{cor:VCpf}, there is $F\subset G$, of size at most $n$, such that $\phi(bx)$ is realized in $F$ for any $b\in G$. So the right translates of $\phi(x)$ by elements in $F\inv$ cover $G$, i.e. $\phi(x)$ is right $n$-generic. By choice of $n$ and the same argument applied to $\phi^r(x;y)$ (which is $2^k$-NIP), we see that $\phi(x)$ is left $n$-generic.
\end{proof}

\begin{corollary}\label{cor:left=right}
Let $\phi(x)$ be a formula such that $\phi^{\ell}(x;y)$ is NIP. The following are equivalent:
\begin{enumerate}[$(i)$]
\item $\phi(x)$ is left generic;
\item $\phi(x)$ is right generic;
\item $\mu(\phi(x))>0$.
\end{enumerate}
\end{corollary}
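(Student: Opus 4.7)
The plan is to deduce this three-way equivalence by using the positivity of $\mu(\phi(x))$ as a pivot. The implications $(i)\Leftarrow(iii)\Rightarrow(ii)$ follow directly from Corollary \ref{cor:VCgen}: if $\phi^\ell(x;y)$ is $k$-NIP and $\mu(\phi(x))>0$, then picking any rational $\epsilon$ strictly below $\mu(\phi(x))$ yields an integer $n=n(k,\epsilon)$ such that $\phi(x)$ is simultaneously left $n$-generic and right $n$-generic. So this direction is immediate, with no extra work needed.

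For the converse implications, I would use that the pseudofinite counting measure $\mu$ is both left and right invariant on $G$, as noted at the start of Section \ref{sec:pre3}. If $(i)$ holds, so that $G=\bigcup_{i=1}^n g_i\cdot\phi(G)$ for some $g_1,\ldots,g_n\in G$, then by finite subadditivity and left-invariance of $\mu$,
\[
1=\mu(G)\leq\sum_{i=1}^n\mu(g_i\cdot\phi(G))=n\cdot\mu(\phi(x)),
\]
so $\mu(\phi(x))\geq 1/n>0$, establishing $(iii)$. The implication $(ii)\Rightarrow(iii)$ is identical, using right-invariance in place of left-invariance.

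I do not anticipate any real obstacle: the result is essentially a repackaging of Corollary \ref{cor:VCgen} together with elementary facts about two-sidedly invariant probability measures, and no fresh model-theoretic input is needed beyond the VC-theoretic content already absorbed into the previous corollary. The only small judgment call is choosing $\epsilon<\mu(\phi(x))$ rational in order to invoke the uniform bound $n(k,\epsilon)$.
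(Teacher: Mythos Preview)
Your proof is correct and follows exactly the same approach as the paper: the implications $(iii)\Rightarrow(i)$ and $(iii)\Rightarrow(ii)$ come from Corollary~\ref{cor:VCgen}, while $(i)\Rightarrow(iii)$ and $(ii)\Rightarrow(iii)$ come from invariance and finite additivity of $\mu$. Your write-up simply spells out the subadditivity computation that the paper leaves implicit.
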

\begin{proof}
$(i)\Rightarrow(iii)$ and $(ii)\Rightarrow(iii)$ are by invariance and finite additivity of $\mu$. $(iii)\Rightarrow (i)$ and $(iii)\Rightarrow (ii)$ are by Corollary \ref{cor:VCgen}.
\end{proof}

In light of the previous corollary, we will just say $\phi(x)$ is \emph{generic} (or \emph{$n$-generic}), in the case that $\phi^{\ell}(x;y)$ is NIP and $\mu(\phi(x))>0$.

\begin{corollary}\label{cor:genideal}
Let $\phi(x)$ be a formula such that $\phi^{\ell}(x;y)$ is NIP. Then at least one of $\phi(x)$ or $\neg\phi(x)$ is generic.
\end{corollary}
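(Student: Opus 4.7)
The plan is to reduce this to a direct application of the positivity clause in Corollary \ref{cor:left=right}, using that the NIP condition is preserved under negation. First I would observe that $(\neg\phi)^{\ell}(x;y)$ is literally $\neg\phi^{\ell}(x;y)$, and since a formula has the same shatter function as its negation (negation swaps $X \leftrightarrow [k]\setminus X$ in the definition of NIP), $\phi^{\ell}$ being NIP forces $(\neg\phi)^{\ell}$ to be NIP as well. So both $\phi(x)$ and $\neg\phi(x)$ are candidates to be generic in the sense just introduced.

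Next I would invoke finite additivity of $\mu$: since $\mu$ is a finitely additive probability measure on $M$, we have
\[
\mu(\phi(x)) + \mu(\neg\phi(x)) = \mu(G) = 1.
\]
Therefore at least one of $\mu(\phi(x))$ and $\mu(\neg\phi(x))$ is at least $\tfrac{1}{2}$, and in particular strictly positive.

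Combining these two observations with the convention immediately preceding the corollary (namely, that a formula is called \emph{generic} when its left-stratification is NIP and it has positive pseudofinite measure), the conclusion follows. Equivalently, one can invoke the implication $(iii)\Rightarrow(i)$ of Corollary \ref{cor:left=right} applied to whichever of $\phi(x)$, $\neg\phi(x)$ has positive measure, to conclude it is left (and right) generic. There is no real obstacle here: the statement is essentially just a repackaging of finite additivity together with closure of NIP under negation.
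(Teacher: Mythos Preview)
Your proof is correct and follows essentially the same approach as the paper, which gives the one-line argument ``at least one of $\phi(x)$ or $\neg\phi(x)$ must have positive $\mu$-measure.'' You are simply more explicit than the paper about checking that $(\neg\phi)^{\ell}$ is also NIP, which the paper leaves to the reader.
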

\begin{proof}
At least one of $\phi(x)$ or $\neg\phi(x)$ must have positive $\mu$-measure.
\end{proof}

The final goal of this section is to show that, with respect to NIP formulas, the pseudofinite counting measure is the unique left-invariant finitely additive probability measure (this is made precise in the following theorem). Let $\Def(G)$ denote the Boolean algebra of all formulas in one free variable. 

\begin{theorem}\label{thm:unique}
Suppose $\cB$ is a left-invariant sub-algebra of $\Def(G)$ such that, for any $\phi(x)\in\cB$, $\phi^\ell(x;y)$ is NIP. Then the restriction of $\mu$ to $\cB$ is the unique left-invariant finitely additive probability measure on $\cB$.
\end{theorem}
\begin{proof}
Let $\nu$ be a left-invariant finitely additive probability measure on $\cB$ and fix $\phi(x)\in\cB$. We show that $\nu(\phi(x))=\mu(\phi(x))$. Let $\delta(x;y)$ denote $\phi(x\cdot y)$. Note that $\delta(x;y)$ is NIP and $\delta(a;y)\in\cB$ for any $a\in G$. Now fix $\epsilon>0$.   By Corollary \ref{cor:VCpf}, there are (not necessarily distinct) $a_1,\ldots,a_r\in G$ such that, for any $b\in G$, 
\[
|\mu(\delta(x;b))-\E(a_1,\ldots,a_r;\delta(x;b))|\leq\epsilon.
\]
Let $M_0\prec G$ be a small model containing $a_1,\ldots,a_r$ and any parameters in $\phi(x)$. By Proposition \ref{prop:VCmt}, there are (not necessarily distinct) $b_1,\ldots,b_s\in G$ such that, for any $a\in M_0$, 
\[
|\nu(\delta(a;y))-\E(b_1,\ldots,b_s;\delta(a;y))|\leq\epsilon.
\]
Note that $\nu(\delta(a;y))=\nu(\phi(x))$ for any $a\in G$ by left-invariance of $\nu$, and $\mu(\delta(x;b))=\mu(\phi(x))$ for any $b\in G$ by right-invariance of $\mu$. For $1\leq i\leq r$ and $1\leq j\leq s$, set $\delta_{i,j}=1$ if $G\models\delta(a_i,b_j)$, and set $\delta_{i,j}=0$ otherwise. Given real numbers $u,v$, we write $u\approx_\epsilon v$ to denote $|u-v|\leq\epsilon$. We have
\begin{multline*}
\textstyle\nu(\phi(x))= \frac{1}{r}\sum\limits_{i=1}^r\nu(\delta(a_i,y))\approx_\epsilon\frac{1}{r}\sum\limits_{i=1}^r\E(\bbar;\delta(a_i,y))=\frac{1}{rs}\sum\limits_{i=1}^r\sum\limits_{j=1}^s\delta_{i,j}\\
\textstyle =\frac{1}{rs}\sum\limits_{j=1}^s\sum\limits_{i=1}^r\delta_{i,j}=\frac{1}{s}\sum\limits_{j=1}^s\E(\abar;\delta(x,b_j))\approx_\epsilon\frac{1}{s}\sum\limits_{j=1}^s\mu(\delta(x;b_j))=\mu(\phi(x)),
\end{multline*}
and so $\nu(\phi(x))\approx_{2\epsilon}\mu(\phi(x))$. So $\mu(\phi(x))=\nu(\phi(x))$ since $\epsilon>0$ was arbitrary. 
\end{proof}

\section{Stabilizers and $G^{00}$}\label{sec:G00}

Throughout this section, and for the rest of the paper, we continue to work with a sufficiently saturated pseudofinite $\cL$-structure $G$ expanding a group.

\subsection{Stabilizers of formulas}

\begin{definition}
Let $\phi(x)$ be a formula.
\begin{enumerate}
\item Given $\epsilon\geq 0$, define
\[
\Stab^\epsilon_\mu(\phi(x))=\{g\in G:\mu(\phi(g\inv x)\smd\phi(x))\leq\epsilon\}.
\]
\item Define $\Stab_\mu(\phi(x))=\Stab^0_\mu(\phi(x))=\{g\in G:\mu(\phi(g\inv x)\smd\phi(x))=0\}$.
\end{enumerate}
\end{definition}

\begin{proposition}\label{prop:stabepdelta}
Suppose $\phi(x)$ is a formula such that $\phi^{\ell}(x;y)$ is NIP. Then, for any $\epsilon>0$, $\Stab^\epsilon_\mu(\phi(x))$ is left generic and $\phi^r$-type-definable over a countable parameter set.
\end{proposition}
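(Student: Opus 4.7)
The plan is to reduce the problem to Corollary \ref{cor:fsg2}, after a change of variables that converts the $\phi^{\ell}$-instances appearing naturally in the definition of $\Stab^\epsilon_\mu(\phi(x))$ into $\phi^r$-instances in $g$.

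First I would observe that by left-invariance of $\mu$, the substitution $x = gz$ yields $\mu(\phi(g\inv x) \smd \phi(x)) = \mu(\phi(z) \smd \phi(gz))$. Setting $\sigma(z;g) := \phi(z) \smd \phi(gz)$, one then has
\[
\Stab^\epsilon_\mu(\phi(x)) = \{g \in G : \mu(\sigma(z;g)) \leq \epsilon\}.
\]
Since $\phi^{\ell}(z;g) = \phi(gz)$ is NIP by hypothesis, so is $\sigma(z;g)$. I would then apply Corollary \ref{cor:fsg2} to $\sigma$ with the closed set $C = [0,\epsilon]$ to obtain a countable $A \subset G$ such that $\Stab^\epsilon_\mu(\phi(x))$ is $\sigma^{\opp}$-type-definable over $A$. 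The key observation is that, for any fixed $a \in G$, $\phi(a)$ is a truth-value, so the instance $\sigma^{\opp}(g;a) = \phi(a) \smd \phi(ga)$ equals (up to a Boolean constant) either $\phi^r(g;a)$ or its negation; hence every $\sigma^{\opp}$-formula in $g$ is already a $\phi^r$-formula, and $\Stab^\epsilon_\mu(\phi(x))$ is $\phi^r$-type-definable over $A$.

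For left-genericity, my plan is to combine the VC-Theorem with a pigeonhole argument. Let $\tau(z;h,h') := \phi(hz) \smd \phi(h'z)$, which is NIP. By Corollary \ref{cor:VCpf} there exist $z_1,\ldots,z_r \in G$ with $|\mu(\tau(z;h,h')) - \E(z_1,\ldots,z_r;\tau(z;h,h'))| \leq \epsilon/2$ for all $h,h' \in G$. Suppose for contradiction that $\Stab^\epsilon_\mu(\phi(x))$ is not left $n$-generic for any $n$; then one may iteratively choose $h_1,h_2,\ldots \in G$ with $h_j \notin h_i \Stab^\epsilon_\mu(\phi(x))$ for all $i<j$. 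Using symmetry of $\Stab^\epsilon_\mu(\phi(x))$ (which follows from $\mu(g\inv\phi \smd \phi) = \mu(\phi \smd g\phi)$) and left-invariance of $\mu$, this translates to $\mu(h_i\phi \smd h_j\phi) > \epsilon$, i.e., $\mu(\tau(z;h_i\inv,h_j\inv)) > \epsilon$, for all $i \neq j$.

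Now consider the indicator vectors $v_i := (\phi(h_i\inv z_1), \ldots, \phi(h_i\inv z_r)) \in \{0,1\}^r$. If $v_i = v_j$ for some $i \neq j$, then $\phi(h_i\inv z_k) = \phi(h_j\inv z_k)$ for every $k$, giving $\E(z_1,\ldots,z_r;\tau(z;h_i\inv,h_j\inv)) = 0$ and contradicting the VC approximation of $\mu(\tau(z;h_i\inv,h_j\inv)) > \epsilon$. So the $v_i$ are pairwise distinct in $\{0,1\}^r$, which is impossible for an infinite sequence; hence $\Stab^\epsilon_\mu(\phi(x))$ is left $n$-generic for some $n \leq 2^r$. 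The most delicate step is spotting the change of variables $x=gz$: without it, one naturally obtains only $\phi^{\ell}$-type-definability in $g\inv$ rather than the $\phi^r$-type-definability in $g$ that the statement demands.
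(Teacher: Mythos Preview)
Your proof is correct and follows essentially the same approach as the paper. For $\phi^r$-type-definability, your formula $\sigma(z;g)=\phi(z)\smd\phi(gz)$ is exactly the paper's $\theta(x;y)=\phi(y\cdot x)\smd\phi(x)$, and both arguments reduce to Corollary~\ref{cor:fsg2} after noting that $\sigma^{\opp}$-instances are $\phi^r$-formulas up to a constant. For left genericity, the paper uses the same NIP formula $\psi(x;y_1,y_2)=\phi(y_1x)\smd\phi(y_2x)$ (your $\tau$) and the same finite set from Corollary~\ref{cor:VCpf}; it then phrases the conclusion as a direct covering argument (representatives of the equivalence relation ``same trace on $F$'' give the finitely many translates), whereas you reach the same conclusion by contradiction via pigeonhole on indicator vectors---the two are contrapositives of one another.
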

\begin{proof}
Let $\psi(x;y_1,y_2)$ denote $\phi(y_1\cdot x)\smd\phi(y_2\cdot x)$, and note that $\psi(x;y_1,y_2)$ is NIP. By Corollary \ref{cor:VCpf}, we may fix a finite set $F\subset G$ such that, for any $b_1,b_2\in G$, if $\mu(\psi(x;b_1,b_2))>\epsilon$ then $\psi(x;b_1,b_2)$ is realized in $F$. Define an equivalence relation $\sim$ on $G$ such that $g\sim h$ if and only if $F\cap g\phi(G)=F\cap h\phi(G)$. Then $\sim$ has finitely many classes and so we may pick representatives $g_1,\ldots,g_n$. Let $X=\Stab^\epsilon_\mu(\phi(x))$. We show $G=g_1X\cup\ldots\cup g_n X$. Fix $h\in G$. Then $h\sim g_i$ for some $1\leq i\leq n$. It follows that $\psi(x;h\inv,g_i\inv)$ is not realized in $F$, and so
\[
\mu(\phi(h\inv g_ix)\smd\phi(x))= \mu(\psi(x;h\inv,g_i\inv))\leq\epsilon.
\] 
Therefore $g_i\inv h\in X$, and so $h\in g_iX$, as desired. 

Finally, let $\theta(x;y)$ denote $\phi(y\cdot x)\smd\phi(x)$, which is  NIP. We have 
\[
\Stab_\mu^\epsilon(\phi(x))=\{g\in G:\mu(\theta(x;g))\leq\epsilon\},
\] 
and so $\Stab^\epsilon_\mu(\phi(x))$ is $\theta^{\opp}$-type-definable over a countable parameter set by Corollary \ref{cor:fsg2}. Since any instance of $\theta^{\opp}(y;x)$ is equivalent to an instance of $\phi^r(x;y)$, we have the desired result.
\end{proof}

\begin{remark}
Let $\phi(x)$ and $\epsilon>0$ be as in the proof of Proposition \ref{prop:stabepdelta}. Note that if $\phi^{\ell}(x;y)$ is $k$-NIP, and $\pi$ denotes the shatter function for $(G,\{\phi(gx):g\in G\})$, then $\sim$ has at most $\pi(r(k,\epsilon))$ classes, where $r(k,\epsilon)$ is given by Corollary \ref{cor:VCpf}. By the Sauer-Shelah Lemma and Remark \ref{rem:VCbound}, $\Stab^\epsilon_\mu(\phi(x))$ is $n$-generic with  $n\leq \epsilon^{\nv O_k(1)}$.
\end{remark}

\begin{corollary}\label{cor:stabdelta}
Suppose $\phi(x)$ is a formula such that $\phi^{\ell}(x;y)$ is NIP. Then $\Stab_\mu(\phi(x))$ is a subgroup of $G$ of bounded index, which is $\phi^r$-type-definable over a countable parameter set. 
\end{corollary}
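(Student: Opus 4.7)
The plan is to derive the corollary from Proposition~\ref{prop:stabepdelta} by writing $\Stab_\mu(\phi(x)) = \bigcap_{n \geq 1} \Stab^{1/n}_\mu(\phi(x))$. The $\phi^r$-type-definability over a countable parameter set is then immediate: each set in the intersection is $\phi^r$-type-definable over a countable set by Proposition~\ref{prop:stabepdelta}, and a countable union of countable sets is countable.

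For the subgroup property I would use only left-invariance of $\mu$ together with the triangle inequality $\mu(A \smd C) \leq \mu(A \smd B) + \mu(B \smd C)$ for symmetric differences. To see closure under multiplication, I would estimate $\mu(\phi((gh)\inv x) \smd \phi(x))$ by inserting the set $\phi(h\inv x)$; after the change of variables $y = g\inv x$, the two summands become $\mu(\phi(h\inv y) \smd \phi(y))$ and $\mu(\phi(y) \smd \phi(gy))$, both zero when $g, h \in \Stab_\mu$. Closure under inversion follows from a single left-invariant substitution.

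The bounded-index claim is the main obstacle. Abbreviate $S_\epsilon := \Stab^\epsilon_\mu(\phi(x))$. The same triangle-inequality argument yields two closure-like facts: $S_\epsilon$ is symmetric (i.e.\ $S_\epsilon\inv = S_\epsilon$) and $S_{\epsilon/2} \cdot S_{\epsilon/2} \seq S_\epsilon$. By Proposition~\ref{prop:stabepdelta}, for each $n \geq 1$ there exist $m_n$ and elements $g_{n,1}, \ldots, g_{n, m_n} \in G$ with $G = \bigcup_{i \leq m_n} g_{n,i} S_{1/n}$. I would then define a map $\Phi \colon G \to \prod_n [m_n]$ by choosing $\Phi(h)_n$ to be some index $i \in [m_n]$ with $h \in g_{n,i} S_{1/n}$. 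If $\Phi(h) = \Phi(h')$, then $h\inv h' \in S_{1/n} \cdot S_{1/n} \seq S_{2/n}$ for every $n$, using symmetry of $S_{1/n}$; intersecting over $n$ gives $h\inv h' \in \Stab_\mu$. Hence $\Phi$ factors through $G/\Stab_\mu$ and yields $[G : \Stab_\mu] \leq \prod_n m_n \leq 2^{\aleph_0}$, which is bounded in the sufficiently saturated $G$. The semigroup-like property $S_{\epsilon/2} \cdot S_{\epsilon/2} \seq S_\epsilon$ is the essential input that lets the product map $\Phi$ detect cosets of $\Stab_\mu$.
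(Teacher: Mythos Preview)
Your proof is correct and follows the same outline as the paper's: verify the subgroup property directly, write $\Stab_\mu(\phi(x))=\bigcap_{n\geq 1}S_{1/n}$, and invoke Proposition~\ref{prop:stabepdelta} for genericity and $\phi^r$-type-definability of each $S_{1/n}$. The paper then simply asserts that bounded index follows, whereas you supply an explicit mechanism---the symmetry $S_\epsilon^{-1}=S_\epsilon$ and the product inclusion $S_{\epsilon/2}\cdot S_{\epsilon/2}\subseteq S_\epsilon$---that lets the coordinate map $\Phi$ separate cosets and bound the index by $2^{\aleph_0}$. This is a genuine addition of detail rather than a different route: the paper's one-line deduction is not literally immediate from ``each $S_\epsilon$ is generic'' without something equivalent to your triangle-inequality observation (mere containment in generic sets does not bound the index of a subgroup), so your argument is exactly the kind of justification the paper is leaving to the reader.
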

\begin{proof}
Using invariance and finite additivity of $\mu$, it is straightforward to check that $\Stab_\mu(\phi(x))$ is a subgroup of $G$. By definition, $\Stab_\mu(\phi(x))=\bigcap_{\epsilon\in\Q^+}\Stab^{\epsilon}_\mu(\phi(x))$. By Proposition \ref{prop:stabepdelta}, each set in this intersection is generic and $\phi^r$-type-definable over a countable parameter set. Therefore $\Stab_\mu(\phi(x))$ has bounded index and is $\phi^r$-type-definable over a countable parameter set. 
\end{proof}

Given a formula $\phi(x)$, the formula $\phi^{\ell}(x;y)$ is \emph{invariant} in the sense that any left translate of an instance of $\phi^{\ell}(x;y)$ is also an instance of $\phi^{\ell}(x;y)$. We want to work with the general class of formulas satisfying this property.

\begin{definition}
A formula $\delta(x;\ybar)$  is \textbf{(left) invariant} if, for any $a,\bbar\in G$, there is $\cbar\in G$ such that $\delta(ax;\bbar)$ is equivalent to $\delta(x;\cbar)$. 
\end{definition}

The main reason to work with invariant $\cL$-formulas is so that we have a well-defined action by $G$ on the space of $\delta$-types (defined below). However, given a formula $\delta(x;\ybar)$, which is invariant and NIP, it will be necessary to consider right translates of $\delta$-formulas in order to pinpoint type-definability at various steps of the subsequent work (as suggested by Proposition \ref{prop:stabepdelta}). Therefore, we set the following notation.

\begin{definition}
Given a formula $\delta(x;\ybar)$, let $\delta^r(x;\ybar,u)$ denote the formula $\delta(x\cdot u;y)$.
\end{definition}

Note that if an invariant $\cL$-formula $\delta(x;\ybar)$ is also \emph{right} invariant (e.g. if $G$ is abelian), then $\delta^r(x;\ybar,u)$ is essentially the same as $\delta(x;\ybar)$. However, in general, $\delta^r(x;\ybar,u)$ may behave quite differently. Most importantly, $\delta^r(x;\ybar)$ may be NIP, while $\delta^r(x;\ybar,u)$ is not, as demonstrated by the following example.

\begin{example}\label{ex:bad}
Given $k\in\N$, let $G_k$ be the group of permutations of $\{1,\ldots,k+1\}$, and let $H_k$ be the subgroup of permutations fixing $1$. Then, with $G_k$ as the ambient structure, the formula $yx\in H_k$ is $2$-stable (and thus $2$-NIP) since $H_k$ is a subgroup. But $yxy\in H_k$ is not $k$-NIP. To see this, let $X=\{2,\ldots,k+1\}$. Given $n\in X$ and $I\seq X$, let $a_n\in G_k$ be the transposition $(1~n)$, and let $b_I\in G_k$ be a permutation whose set of fixed points in $X$ is precisely $I$ (such a permutation always exists since $1\not\in X$). Then, given $n\in  X$ and $I\seq X$, $a_nb_Ia_n\in H_k$ if and only if $n\in I$. 

Now let $\cU$ be a nonprincipal ultrafilter on $\N$ and let $G=\prod_{\cU}G_k$. If $A=\prod_{\cU}H_k$, then $yx\in A$ is stable, while $yxy\in A$ has the independence property.  
\end{example}

Despite the behavior seen in the last example, we will still recover sufficiently good behavior for instances of the formula $\delta^r(x;\ybar,u)$ (see, e.g., Proposition \ref{prop:deltarfacts}).

The next goal is to define the local analog of $G^{00}$. We will first give an explicit construction using ``measure-stabilizers" of formulas, and then show that the object obtained behaves as expected (see Theorem \ref{thm:G00}).

\begin{definition}
Let $\delta(x;\ybar)$ be a formula. Define 
\[
G^*_\delta=\bigcap_{\abar\in G^{\ybar}}\Stab_\mu(\delta(x;\abar)).
\]
\end{definition}

\begin{lemma}\label{lem:boundedphi}
If $\delta(x;\ybar)$ is invariant and NIP, then there is a bounded set $A\seq G^{\ybar}$ such that $G^*_\delta=\bigcap_{\abar\in A}\Stab_\mu(\delta(x;\abar))$. 
\end{lemma}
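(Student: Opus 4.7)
The plan is to combine Corollary \ref{cor:fsg2} (applied uniformly) with a saturation argument. First, let $\theta(x;y,\ybar) := \delta(y\cdot x;\ybar) \smd \delta(x;\ybar)$, which is NIP since $\delta$ is. By left-invariance of $\mu$, one has $\Stab_\mu(\delta(x;\abar)) = \{g \in G : \mu(\theta(x;g,\abar)) = 0\}$ for every $\abar$. Applying Corollary \ref{cor:fsg2} to the NIP formula $\theta$ (viewing $x$ as the $\xbar$-variable and $(y,\ybar)$ as the parameter variable) with the closed set $C = \{0\}$, I obtain a countable $A \subset G$ and a partial type $r(y,\ybar)$ in $\theta^{\opp}$-formulas over $A$ such that, for all $g \in G$ and $\abar \in G^{\ybar}$, $r(g,\abar)$ holds if and only if $g \in \Stab_\mu(\delta(x;\abar))$. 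The gain over a direct application of Corollary \ref{cor:stabdelta} is that $A$ is \emph{independent} of $\abar$.

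Pulling the universal quantifier through $r$: $g \in G^*_\delta$ iff $r(g,\abar)$ holds for all $\abar \in G^{\ybar}$, iff $\forall \ybar\, \rho(g,\ybar)$ holds for every $\rho(y,\ybar) \in r$. Since each $\rho(y,\ybar) \in r$ has parameters only from $A$, so does each formula $\forall \ybar\, \rho(y,\ybar)$. Hence $G^*_\delta$ is type-definable over the countable set $A$, namely by the partial type $\Sigma(y) := \{\forall \ybar\, \rho(y,\ybar) : \rho \in r\}$.

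The desired bounded set now falls out of compactness. Fix $\sigma(y) \in \Sigma$. Since $G^*_\delta = \bigcap_{\abar \in G^{\ybar}} \Stab_\mu(\delta(x;\abar)) \subseteq \sigma(G)$, the partial type $\{\neg\sigma(y)\} \cup \{\rho(y,\abar) : \rho \in r,\, \abar \in G^{\ybar}\}$ is unsatisfiable in $G$. By saturation of $G$, some finite sub-type is unsatisfiable, producing finitely many $\abar_1^\sigma, \ldots, \abar_{n_\sigma}^\sigma$ with $\bigcap_j \Stab_\mu(\delta(x;\abar_j^\sigma)) \subseteq \sigma(G)$. Setting $A^* := \bigcup_{\sigma \in \Sigma} \{\abar_1^\sigma, \ldots, \abar_{n_\sigma}^\sigma\}$, which is countable and hence bounded (as $r$ and therefore $\Sigma$ is countable), I obtain $\bigcap_{\abar \in A^*} \Stab_\mu(\delta(x;\abar)) \subseteq \bigcap_{\sigma \in \Sigma} \sigma(G) = G^*_\delta$; the reverse inclusion is immediate. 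The main obstacle is really the first step: extracting from Corollary \ref{cor:fsg2} a partial type defining the stabilizer relation whose parameter base $A$ is uniform in $\abar$. Once that uniformity is secured, pulling the quantifier through $r$ and invoking saturation are routine.
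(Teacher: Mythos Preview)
Your argument has a genuine gap in the final ``saturation'' step. The partial type
\[
\{\neg\sigma(y)\}\ \cup\ \{\rho(y,\abar):\rho\in r,\ \abar\in G^{\ybar}\}
\]
has parameters ranging over all of $G^{\ybar}$; this is a \emph{large} parameter set (of size $|G|$), so saturation of $G$ does not let you pass from ``not realized in $G$'' to ``some finite fragment is not realized in $G$''. (For a toy illustration of the failure: $\{y\neq a:a\in G\}$ is never realized in $G$, yet every finite fragment is.) Your first three steps are fine and do establish that $G^*_\delta$ is type-definable over a countable set $A$ via $\Sigma(y)$; but that only gives you a bounded family of \emph{formulas} $\sigma(y)$, not a bounded family of \emph{stabilizers} whose intersection is $G^*_\delta$. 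Extracting the latter from the former is exactly where the large-parameter type sneaks in, and I do not see a straightforward repair: knowing $g\notin\sigma(G)$ tells you $\neg\rho_0(g,\abar)$ for \emph{some} $\abar$, but that $\abar$ depends on $g$, and there is no reason a small collection of $\abar$'s suffices for all $g$.

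The paper's proof avoids this by bounding the number of \emph{distinct} stabilizers directly. One defines $\abar\sim\bbar$ if and only if $\mu(\delta(x;\abar)\smd\delta(x;\bbar))=0$, checks via a triangle-inequality computation that $\abar\sim\bbar$ forces $\Stab_\mu(\delta(x;\abar))=\Stab_\mu(\delta(x;\bbar))$, and then shows $\sim$ has boundedly many classes: otherwise Erd\H{o}s--Rado produces an indiscernible sequence $(\bbar_i)_{i<\omega}$ with $\mu(\delta(x;\bbar_i)\smd\delta(x;\bbar_j))\geq\epsilon$ for all $i\neq j$, and then $\{\delta(x;\bbar_{2i})\smd\delta(x;\bbar_{2i+1}):i<\omega\}$ is consistent, contradicting the finite alternation number of $\delta$. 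A set of class representatives then does the job, with no appeal to saturation over large parameter sets.
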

\begin{proof}
Define an equivalence relation $\sim$ on $G^{\ybar}$ such that $\abar\sim \bbar$ if and only if $\mu(\delta(x;\abar)\smd\delta(x;\bbar))=0$. To find the desired set $A$, it suffices to show that $\abar\sim\bbar$ implies $\Stab_\mu(\delta(x;\abar))=\Stab_\mu(\delta(x;\bbar))$, and that $\sim$ has a bounded number of classes.

For the first claim, fix $\abar,\bbar,g\in G$. The formula $\delta(g\inv x;\abar)\smd\delta(x;\bbar)$ implies 
\[
(\delta(g\inv x;\bbar)\smd \delta(g\inv x;\abar))\vee (\delta(g\inv x;\abar)\smd \delta(x;\abar))\vee(\delta(x;\abar)\smd\delta(x;\bbar)).
\]
So if $\abar\sim \bbar$ and $g\in \Stab_\mu(\delta(x;\abar))$, then $g\in\Stab_\mu(\delta(x;\bbar))$ by invariance and finite additivity of $\mu$. 

The second claim is standard fact about NIP formulas (details are included for the sake of clarity). If $\sim$ has unboundedly many classes then by Erd\H{o}s-Rado there is an indiscernible sequence $(\bbar_i)_{i<\omega}$, and some $\epsilon>0$, such that $\mu(\delta(x;\bbar_i)\smd\delta(x;\bbar_j))\geq\epsilon$ for all $i\neq j$. Then $\{\delta(x;\bbar_{2i})\smd\delta(x;\bbar_{2i+1}):i<\omega\}$ is consistent by \cite[Lemma 2.8]{HPP}. This contradicts that $\delta(x;\ybar)$ is NIP and thus has finite alternation number (e.g. \cite[Theorem 12.17]{pobook}).
\end{proof}

From Corollary \ref{cor:stabdelta} and Lemma \ref{lem:boundedphi}, we immediately obtain the following result.

\begin{corollary}\label{cor:G00}
If $\delta(x;\ybar)$ is invariant and NIP then $G^*_\delta$ is a $\delta^r$-type-definable bounded-index subgroup of $G$.
\end{corollary}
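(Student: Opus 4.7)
The plan is simply to combine the two preceding results as the authors suggest. First I apply Lemma \ref{lem:boundedphi} to produce a bounded set $A\seq G^{\ybar}$ with
\[
G^*_\delta=\bigcap_{\abar\in A}\Stab_\mu(\delta(x;\abar)).
\]
Then I show each factor $\Stab_\mu(\delta(x;\abar))$ for $\abar\in A$ is a $\delta^r$-type-definable bounded-index subgroup, and finally observe that this property is preserved by bounded intersection.

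For the factor-wise claim I want to apply Corollary \ref{cor:stabdelta} to the formula $\phi(x):=\delta(x;\abar)$. Its hypothesis is that $\phi^{\ell}(x;y)=\delta(y\cdot x;\abar)$ be NIP. This is where I use invariance of $\delta$: for each $a\in G$ there exists $\cbar\in G^{\ybar}$ such that $\delta(a x;\abar)$ is equivalent to $\delta(x;\cbar)$, so every instance of $\phi^{\ell}(x;y)$ is already an instance of $\delta(x;\ybar)$; hence $\phi^{\ell}$ inherits NIP from $\delta$. Corollary \ref{cor:stabdelta} then gives that $\Stab_\mu(\phi(x))$ is a bounded-index subgroup of $G$, $\phi^r$-type-definable over a countable set. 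Since $\phi^r(x;y)=\delta(x\cdot y;\abar)$ is literally an instance of $\delta^r(x;\ybar,u)$ (take $\ybar\mapsto\abar$, $u\mapsto y$), $\phi^r$-type-definability is a special case of $\delta^r$-type-definability, so each $\Stab_\mu(\delta(x;\abar))$ is $\delta^r$-type-definable of bounded index.

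Finally, $G^*_\delta$ is an intersection over the bounded index set $A$ of sets each cut out by countably many $\delta^r$-formulas, so in total still boundedly many $\delta^r$-formulas; thus $G^*_\delta$ is $\delta^r$-type-definable. It is a subgroup as an intersection of subgroups, and in a sufficiently saturated structure a bounded intersection of bounded-index subgroups has bounded index (the index of the intersection is bounded by the product of the individual indices, which remains small). There is no genuine obstacle here; the only thing worth checking carefully is the bookkeeping that moves between the hypothesis of Corollary \ref{cor:stabdelta} (phrased via $\phi^{\ell}$) and the conclusion we want (phrased via $\delta^r$), which is exactly what invariance of $\delta$ is built to handle.
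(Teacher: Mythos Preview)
Your proof is correct and is exactly the argument the paper intends: the authors simply write that the corollary follows immediately from Corollary~\ref{cor:stabdelta} and Lemma~\ref{lem:boundedphi}, and you have spelled out precisely those two applications, including the verification (via invariance of $\delta$) that $\phi^{\ell}(x;y)$ is NIP so that Corollary~\ref{cor:stabdelta} applies, and the observation that $\phi^r$-formulas are instances of $\delta^r$.
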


\subsection{Stabilizers of types}

\begin{definition}
Fix an invariant formula $\delta(x;\ybar)$.
\begin{enumerate}
\item Given $A\seq G$, let $S_\delta(A)$ denote the space of \textbf{complete $\delta$-types} (i.e. maximal consistent sets of instances of $\delta$) with parameters from $A$. 
\item Given $p\in S_\delta(G)$, let $\Stab(p)=\{g\in G:gp=p\}$ (where $gp=\{\phi(g\inv x):\phi(x)\in p\}$).
\item A $\delta$-type $p$ is \textbf{left generic} (resp. \textbf{right generic}) if every formula in $p$ is left generic (resp. right generic).
\end{enumerate}
\end{definition}

\begin{proposition}\label{prop:deltarfacts}
Suppose $\delta(x;\ybar)$ is invariant and NIP.
\begin{enumerate}[$(a)$]
\item If $\phi(x)$ is a $\delta^r$-formula then $\phi^{\ell}(x;y)$ is NIP.
\item Given $p\in S_{\delta^r}(G)$, the following are equivalent:
\begin{enumerate}[$(i)$]
\item $p$ is left generic;
\item $p$ is right generic;
\item $\mu(\phi(x))>0$ for all $\phi(x)\in p$.
\end{enumerate}
\item The space of (left) generic types in $S_{\delta^r}(G)$ is nonempty and invariant under left and right multiplication.

\end{enumerate}
\end{proposition}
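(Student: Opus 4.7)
The plan is to prove the three parts in order, with (a) doing the essential work and (b), (c) following cleanly from tools already established. For (a), I would first reduce to a single instance, using that NIP is closed under Boolean combinations of formulas in the same free variables. Write $\phi(x)=\delta(xc;\bbar)$; then $\phi^\ell(x;y)=\delta(yxc;\bbar)$, whose $y$-fiber is the set $y\inv\delta(G;\bbar)c\inv$. By the invariance of $\delta$, $y\inv\delta(G;\bbar)$ is itself an instance $\delta(G;\cbar)$ for some tuple $\cbar$ depending on $y$. Hence the set system $\{\phi^\ell(G;y):y\in G\}$ is contained in the image of the $\delta$-set system under the bijection $X\mapsto Xc\inv$ of $G$. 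Since right translation by a fixed element preserves VC-dimension, NIP of $\delta$ transfers to $\phi^\ell$.

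For (b), once (a) is in hand, every $\delta^r$-formula $\phi(x)$ has $\phi^\ell(x;y)$ NIP, so Corollary \ref{cor:left=right} applies to each $\phi\in p$: left generic iff right generic iff $\mu(\phi)>0$. Since left/right genericity of $p$ is defined formula-by-formula, the three-way equivalence passes from individual formulas to $p$ itself.

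For (c), nonemptiness follows from a standard filter argument: the collection of $\delta^r$-formulas $\psi(x)$ with $\mu(\psi)=1$ is a filter by finite additivity of $\mu$, and each of its members is consistent by Corollary \ref{cor:fsg}, so it extends to some $p\in S_{\delta^r}(G)$. Any $\phi\in p$ must then satisfy $\mu(\phi)>0$ (else $\neg\phi$ would lie in the filter and hence in $p$), so by (b) $p$ is generic. For invariance, I would verify that the class of $\delta^r$-formulas is closed under left and right translation: left translation uses the invariance of $\delta$ applied to the left factor ($\delta(g\inv xc;\bbar)\equiv\delta(xc;\bbar')$ for a suitable $\bbar'$), while right translation is absorbed into the extra $u$-variable ($\delta(xgc;\bbar)=\delta^r(x;\bbar,gc)$). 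Combined with the two-sided invariance of $\mu$ and part (b), this shows that if $p$ is generic then so are $gp$ and $pg$ for every $g\in G$.

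The only step requiring a genuine idea is part (a): one must exploit the invariance of $\delta$ itself in order to sidestep the fact that $\delta^r$ need not be NIP in its own variables (cf.\ Example \ref{ex:bad}). The remainder is essentially bookkeeping, leaning on Corollaries \ref{cor:fsg} and \ref{cor:left=right} together with the finite additivity and two-sided invariance of $\mu$.
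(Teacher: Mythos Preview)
Your proposal is correct and follows essentially the same approach as the paper. In part (a) you phrase the key step in set-system language (the $\phi^\ell$-fibers lie in a right-translate of the $\delta$-set system, and a bijection of the base set preserves VC-dimension), whereas the paper writes out the equivalent direct shatter argument; both then close under Boolean combinations. Parts (b) and (c) match the paper's reasoning, with your (c) spelling out explicitly the closure of $\delta^r$-formulas under two-sided translation that the paper leaves implicit.
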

\begin{proof}
Part $(a)$. Fix $k\geq 1$ such that $\delta(x;\ybar)$ is $k$-NIP. We first claim that, for any $\bbar,c\in G$, if $\phi(x)$ denotes $\delta^r(x;\bbar,c)$, then $\phi^{\ell}(x;y)$ is $k$-NIP. To see this, suppose we have $(r_i)_{i\in [n]}$ and $(s_I)_{I\seq [n]}$ such that $\delta^r(s_Ir_i;\bbar,c)$ holds if and only if $i\in I$. For any $I$, there is $\abar_I$ such that $\delta(s_I\cdot x;\bbar)$ is equivalent to $\delta(x;\abar_i)$. So, setting $g_i=r_ic$, we have $\delta(g_i;\abar_I)$ if and only if $i\in I$. So $n<k$. Part $(a)$ now follows by induction on the construction of $\delta^r$-formulas.

Part $(b)$. This follows from part $(a)$ and Corollary \ref{cor:left=right}.

Part $(c)$. By finite additivity of $\mu$, the measure $0$ sets form an ideal, and so there are types $p\in S_{\delta^r}(G)$ satisfying condition $(iii)$ of part $(b)$. So the claims follow from parts $(a)$ and $(b)$.
\end{proof}

Given an  invariant NIP formula $\delta(x;\ybar)$ and a $\delta^r$-type $p$, we will just call $p$ \emph{generic} in case it is left generic (equivalently right generic).

The following are some technical observations that will be needed in the proof of Theorem \ref{thm:G00}.

\begin{proposition}\label{prop:G00tech}
Suppose $\delta(x;\ybar)$ is invariant.
\begin{enumerate}[$(a)$]
\item $G^*_{\delta}\seq\Stab_\mu(\phi(x))$ for any $\delta^r$-formula $\phi(x)$.
\item $G^*_{\delta}=G^*_{\delta^r}$.
\item For any $p\in S_{\delta^r}(G)$, there is a unique right coset $C$ of $G^*_\delta$ such that $p\models C$.
\end{enumerate}
\end{proposition}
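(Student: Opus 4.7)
Parts (a) and (b) should follow from right-invariance of the pseudofinite measure $\mu$, together with the fact that $\mu$-stabilizers are closed under Boolean combinations (by subadditivity of $\mu$ on symmetric differences). For (a), this Boolean closure reduces the task to showing $G^*_\delta \seq \Stab_\mu(\delta^r(x;\bbar,c))$ for each instance $\delta^r(x;\bbar,c) = \delta(xc;\bbar)$. Right-invariance of $\mu$, applied via the substitution $y = xc$, yields
\[
\mu\bigl(\delta(g\inv xc;\bbar)\smd \delta(xc;\bbar)\bigr) = \mu\bigl(\delta(g\inv y;\bbar)\smd \delta(y;\bbar)\bigr),
\]
which is $0$ whenever $g \in G^*_\delta \seq \Stab_\mu(\delta(x;\bbar))$. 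Part (b) is then immediate: $G^*_{\delta^r} \seq G^*_\delta$ follows by specializing $u=e$ in the definition of $G^*_{\delta^r}$ (since $\delta^r(x;\bbar,e)$ is $\delta(x;\bbar)$), and the reverse inclusion is precisely what (a) gives applied to each instance of $\delta^r$.

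For part (c), I would write $\Gamma = G^*_\delta$, which by Corollary \ref{cor:G00} is a $\delta^r$-type-definable subgroup of $G$ of bounded index, defined by some partial $\delta^r$-type $\Sigma(x)$ over $G$. The key observation is that right-translation of a $\delta^r$-formula is again a $\delta^r$-formula (each atomic instance $\delta(xc;\bbar)$ becomes $\delta(xg\inv c;\bbar) = \delta^r(x;\bbar,g\inv c)$), so for each $g \in G$ the coset $\Gamma g$ is defined by the $\delta^r$-partial type $\Sigma(xg\inv)$. Hence ``$p \models \Gamma g$'' unwinds to $\Sigma(xg\inv) \seq p$, or equivalently, any realization $a$ of $p$ in an elementary extension $\tilde G \succeq G$ satisfies $a \in \Gamma^{\tilde G} g$. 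Uniqueness is then immediate: if $p \models \Gamma g_1$ and $p \models \Gamma g_2$, a realization $a$ lies in both $\Gamma^{\tilde G} g_1$ and $\Gamma^{\tilde G} g_2$, so $g_1 g_2\inv \in \Gamma^{\tilde G} \cap G = \Gamma$, giving $\Gamma g_1 = \Gamma g_2$.

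The core of (c) is existence, which I plan to prove by contradiction using saturation of $G$. Choose coset representatives $\{g_j : j<\lambda\}$ for $\Gamma$ in $G$, where $\lambda = [G:\Gamma]$ is bounded. If no coset $\Gamma g_j$ works for $p$, then for each $j$ completeness of $p$ supplies some $\sigma_{i(j)} \in \Sigma$ with $\neg \sigma_{i(j)}(xg_j\inv) \in p$. The resulting partial type $q(x) := \{\neg \sigma_{i(j)}(xg_j\inv) : j<\lambda\}$ sits inside $p$, hence is consistent, so by sufficient saturation of $G$ it is realized by some $b \in G$. But $b$ lies in some $\Gamma g_{j_0}$, forcing $bg_{j_0}\inv \in \Gamma \seq \sigma_{i(j_0)}(G)$, which contradicts $\neg\sigma_{i(j_0)}(bg_{j_0}\inv)$. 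The main obstacle is this saturation step: one must confirm that ``sufficiently saturated'' in the setup really supports realizing partial types indexed by $[G:\Gamma]$, and one must carefully distinguish $\Gamma \seq G$ from its extension $\Gamma^{\tilde G} \seq \tilde G$, invoking $\Gamma^{\tilde G} \cap G = \Gamma$ at the right moment.
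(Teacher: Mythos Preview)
Your arguments for parts $(a)$ and $(b)$ are correct and match the paper's proof essentially verbatim: Boolean closure of $\mu$-stabilizers reduces $(a)$ to atomic instances, and right-invariance of $\mu$ handles those instances exactly as you describe (the paper places the atomic computation in the proof of $(b)$ rather than $(a)$, but the content is identical).

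For part $(c)$ your argument is correct, but the paper's is somewhat more streamlined. Rather than arguing by contradiction, the paper fixes a small model $M\prec G$ over which all right cosets of $G^*_\delta$ are $\delta^r$-type-definable (possible since the index is bounded and $G^*_\delta$ is $\delta^r$-type-definable), then simply takes any $a\in G$ realizing $p|_M$ and observes that $p$ concentrates on $G^*_\delta a$. This avoids both the contradiction setup and the excursion to an elementary extension $\tilde G$ for uniqueness: since distinct right cosets are disjoint type-definable subsets of $G$, compactness immediately gives that a complete type cannot contain both type-definitions, so uniqueness is a one-line remark. Your saturation concern is legitimate but harmless in either version, since ``bounded index'' means precisely that $[G:\Gamma]$ is small relative to the saturation of $G$, so the partial type $q$ you build (over the parameters in $\Sigma$ together with the $g_j$) is small and hence realized in $G$. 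Both arguments ultimately rest on the same two ingredients: right-invariance of $\delta^r$-formulas (so cosets are $\delta^r$-type-definable) and saturation against a small parameter set.
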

\begin{proof}
Part $(a)$. Given $\delta^r$-formulas $\phi(x)$ and $\psi(x)$, we have $\Stab_\mu(\neg\phi(x))=\Stab_\mu(\phi(x))$ and $\Stab_\mu(\phi(x))\cap\Stab_\mu(\psi(x))\seq\Stab_\mu(\phi(x)\wedge\psi(x))$. So the claim follows by induction on the construction of $\delta^r$-formulas.

Part $(b)$. By definition, $G^*_{\delta^r}\seq G^*_\delta$. For the other containment, fix $g\in G^*_\delta$ and $\bbar,c\in G$. By right invariance of $\mu$, and since $g\in\Stab_\mu(\delta(x;\bbar))$, we have
\[
\mu(\delta^r(g\inv x;\bbar,c)\smd\delta^r(x;\bbar,c))=\mu(\delta(g\inv x;\bbar)\smd\delta(x;\bbar))=0,
\]
and so $g\in\Stab_\mu(\delta^r(x;\bbar,c))$. 

Part $(c)$. Note that all right cosets of $G^*_\delta$ are $\delta^r$-type-definable,  since $G^*_\delta$ is $\delta^r$-type-definable and $\delta^r$-formulas are right invariant. Since any complete $\delta^r$-type concentrates on at most one right coset of $G^*_\delta$, it suffices to show that every complete $\delta^r$-type concentrates on some right coset of $G^*_\delta$. Since $G^*_\delta$ is $\delta^r$-type-definable of bounded index, we may fix a small model $M\prec G$ such that all right cosets of $G^*_\delta$ are $\delta^r$-type-definable over $M$. Now, given $p\in S_{\delta^r}(G)$, if $a\in G$ realizes $p|_M$, then $p$ concentrates on $G^*_\delta a$.
\end{proof}

\begin{definition}
Fix a formula $\delta(x;\ybar)$.
\begin{enumerate}
\item Let $S^g_\delta(G)$ denote the set of generic $\delta$-types in $S_\delta(G)$.
\item Given $p\in S_{\delta}(G)$, define $\Stab(p)=\{g\in G:gp=p\}$.
\item Let $G^{00}_\delta$ denote the intersection of all $\delta$-type-definable bounded-index subgroups of $G$.
\end{enumerate}
\end{definition}

Note that, for any invariant formula $\delta(x;\ybar)$, the class of $\delta^r$-type-definable bounded-index subgroups of $G$ is closed under conjugation, and so $G^{00}_{\delta^r}$ is always a normal subgroup of $G$. The next theorem is the main result on $G^*_\delta$, for $\delta(x;\ybar)$ invariant and NIP.

\begin{theorem}\label{thm:G00}
Suppose $\delta(x;\ybar)$ is invariant and NIP.
\begin{enumerate}[$(a)$]

\item $G^*_\delta$ is a $\delta^r$-type-definable bounded-index subgroup of $G$.

\item $\displaystyle G^*_\delta=\bigcap_{p\in S^g_\delta(G)}\Stab(p)=\bigcap_{p\in S^g_\delta(G)}\bigcap_{\phi(x)\in p}\Stab_\mu(\phi(x))$.

\item If $p\in S_{\delta^r}(G)$ is generic then
\[
G^*_\delta=\Stab(p)=\bigcap_{\phi(x)\in p}\Stab_\mu(\phi(x)).
\]
\item $G^*_\delta=G^{00}_{\delta^r}$.
\item $G^{00}_{\delta^r}$ is defined by an intersection of countably many $\delta^r$-formulas.
\end{enumerate}
\end{theorem}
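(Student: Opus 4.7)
The plan is as follows. Part (a) is immediate from Corollary \ref{cor:G00}. For part (b), the strategy is to prove the cyclic chain
\[
G^*_\delta \;\subseteq\; \bigcap_{p\in S^g_\delta(G)}\bigcap_{\phi(x)\in p}\Stab_\mu(\phi(x)) \;\subseteq\; \bigcap_{p\in S^g_\delta(G)}\Stab(p) \;\subseteq\; G^*_\delta.
\]
The first inclusion is by the same Boolean induction as Proposition \ref{prop:G00tech}(a), noting that every $\delta$-formula is a $\delta^r$-formula (via $u=1$). For the second, if $g\in\Stab_\mu(\phi)$ for every $\phi\in p$, then for each $\phi\in p$ the formula $\phi(x)\wedge\neg\phi(g\inv x)$ has $\mu$-measure $0$, so cannot lie in the generic type $p$; completeness then forces $\phi(g\inv x)\in p$ for each $\phi\in p$, which is exactly $gp=p$. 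For the third, if $g\notin G^*_\delta$ pick $\abar$ with $\mu(\delta(g\inv x;\abar)\smd\delta(x;\abar))>0$; by left invariance of $\delta$ this is a $\delta$-formula, hence generic by Corollary \ref{cor:left=right} (its left-stratification is NIP by Proposition \ref{prop:deltarfacts}(a) applied to $\delta^r$-formulas). So extending the ideal of measure-zero $\delta$-formulas to a complete type places it in some $p\in S^g_\delta(G)$, which directly witnesses $gp\neq p$.

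Part (c) follows the same template with $\delta^r$ in place of $\delta$: the first two inclusions repeat verbatim, with Proposition \ref{prop:G00tech}(a) supplying the first. For the final inclusion $\Stab(p)\subseteq G^*_\delta$ I instead invoke Proposition \ref{prop:G00tech}(c): the generic $\delta^r$-type $p$ concentrates on a unique right coset $G^*_\delta a$, whereas $gp$ concentrates on $gG^*_\delta a$, so $gp=p$ forces $g\in G^*_\delta$.

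For part (d), $G^{00}_{\delta^r}\subseteq G^*_\delta$ is immediate from (a). The converse reduces to $G^*_\delta\subseteq H$ for an arbitrary $\delta^r$-type-definable bounded-index subgroup $H$. The key preliminary observation is that left translates of $\delta^r$-formulas are again $\delta^r$-formulas: by left invariance of $\delta$,
\[
\delta^r(hx;\bbar,c)=\delta(h(xc);\bbar)=\delta(xc;\bbar')=\delta^r(x;\bbar',c).
\]
Hence all right cosets of $H$ are $\delta^r$-type-definable, and repeating the saturation argument of Proposition \ref{prop:G00tech}(c) with $H$ in place of $G^*_\delta$ shows every $p\in S_{\delta^r}(G)$ concentrates on a unique right coset of $H$. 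Picking any generic $p\in S_{\delta^r}(G)$ (by Proposition \ref{prop:deltarfacts}(c)) concentrating on $Ha$, part (c) gives $G^*_\delta=\Stab(p)$; hence any $g\in G^*_\delta$ makes $p=gp$ concentrate on $gHa$, whence $gHa=Ha$ and $g\in H$. The main technical step is this coset observation for $H$, which reduces to direct verification of $\delta^r$-closure under left translation plus the saturation argument of Proposition \ref{prop:G00tech}(c).
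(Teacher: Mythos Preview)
Your proof is correct and follows essentially the same approach as the paper's: the cyclic chain of inclusions for parts (b) and (c), the use of Proposition \ref{prop:G00tech}(c) for the inclusion $\Stab(p)\subseteq G^*_\delta$, and the deduction of (d) from (c) via a generic $\delta^r$-type concentrating on a coset of $H$. The only minor difference is that in (d) the paper arranges for the generic $p$ to concentrate on $\Gamma$ itself (so that $a\Gamma\cap\Gamma\neq\emptyset$ directly gives $a\in\Gamma$ as an equality of left cosets), whereas you work with an arbitrary right coset $Ha$; your conclusion ``$gHa=Ha$'' is then not literally an identity of right cosets, but the intended argument (nonempty intersection of $Ha$ and $gHa$ yields $g\in H$) is the same as the paper's and goes through without issue.
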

\begin{proof}
Part $(a)$. This is Corollary \ref{cor:G00}. 

Part $(b)$. We show
\[
G^*_\delta\seq \bigcap_{p\in S^g_\delta(G)}\bigcap_{\phi(x)\in p}\Stab_\mu(\phi(x))\seq \bigcap_{p\in S^g_\delta(G)}\Stab(p)\seq G^*_\delta.
\]
The first containment is immediate from Proposition \ref{prop:G00tech}$(a)$. For the second containment, we fix a generic type $p\in S_\delta(G)$ and show $\bigcap_{\phi(x)\in p}\Stab_\mu(\phi(x))\seq \Stab(p)$. Indeed, suppose $g\in \bigcap_{\phi(x)\in p}\Stab_\mu(\phi(x))$ and fix $\phi(x)\in p$. If $\phi(g\inv x)\not\in p$ then $\phi(g\inv x)\smd\phi(x)\in p$, which contradicts that $p$ is generic and $g\in\Stab_\mu(\phi(x))$. So $\phi(g\inv x)\in p$, and thus we have $g\in \Stab(p)$.

For the third containment, suppose $g\not\in G^*_\delta$. Then there is a $\delta$-formula $\phi(x)$ such that $\mu(\phi(g\inv x)\smd\phi(x))>0$, and so there is a generic type $p\in S_\delta(G)$ containing the formula $\phi(g\inv x)\smd\phi(x)$. So $g\not\in \Stab(p)$.  

Part $(c)$. Fix a generic type $p\in S_{\delta^r}(G)$. We show
\[
G^*_{\delta}\seq\bigcap_{\phi(x)\in p}\Stab_\mu(\phi(x))\seq\Stab(p)\seq G^*_{\delta}.
\]
The first containment is immediate from parts $(a)$ and $(b)$ of Proposition \ref{prop:G00tech}, and the second containment is similar to part $(b)$.  

For the third containment, first  fix $a\in M$ such that $p$ concentrates on $G^*_\delta a$ (such an $a$ exists by Proposition \ref{prop:G00tech}$(c)$). Now fix $g\in \Stab(p)$. Then $p\models g\inv G^*_\delta a$, and so $G^*_\delta a\cap g\inv G^*_\delta a$ is a consistent type, which is therefore realized in $G$. So there are $x,y\in G^*_\delta$ such that $xa=g\inv ya$, and so $g=yx\inv\in G^*_\delta$, as desired.

Part $(d)$. We have $G^{00}_{\delta^r}\seq G^*_{\delta}$ by part $(a)$. Now suppose $\Gamma$ is a $\delta^r$-type-definable subgroup of bounded index. We want to show $G^*_\delta\seq\Gamma$. Let $p\in S_{\delta^r}(G)$ be a generic $\delta^r$-type concentrating on $\Gamma$, and fix $a\in G^*_\delta$. Since $G^*_\delta=\Stab(p)$, it follows that $ap\models \Gamma$, and so $a\Gamma=\Gamma$. 

Part $(e)$. Without loss of generality, we may assume $\cL$ is countable (even finite) and $\delta(x;\ybar)$ is over $\emptyset$. By definition, $G^{00}_{\delta^r}$ is invariant over $\emptyset$. Since $G^{00}_{\delta^r}$ is $\delta^r$-type-definable, it must be type-definable over $\emptyset$, and thus is an intersection of countably many definable subsets of $G$. By an easy saturation exercise, one concludes that $G^{00}_{\delta^r}$ is defined by an intersection of countably many $\delta^r$-formulas. 
\end{proof}

We end this section by analyzing the situation when $\delta(x;\ybar)$ is stable.

\begin{definition}
Given a formula $\delta(x;\ybar)$, let $G^0_\delta$ denote the intersection of all $\delta$-definable finite-index subgroups of $G$.
\end{definition}

For stable $\delta(x;\ybar)$, the group $G^0_\delta$ is $\delta$-definable of finite index (this follows from \cite{HrPiGLF}, with further detail in  \cite{CPT}). The next corollary explains the relationship between $G^0_\delta$, $G^{00}_{\delta}$, $G^0_{\delta^r}$, and $G^{00}_{\delta^r}$ in this case (note that $\delta^r(x;\ybar,u)$ need not be stable, as demonstrated by Example \ref{ex:bad}).  

\begin{corollary}\label{cor:stable}
Assume $\delta(x;\ybar)$ is invariant and stable. Then $G^{00}_\delta=G^0_\delta$ and $G^{00}_{\delta^r}=G^0_{\delta^r}$. Moreover, $G^0_{\delta^r}$ is the normal core of $G^0_{\delta}$, and thus is $\delta^r$-definable of finite index. 
\end{corollary}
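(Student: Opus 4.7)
The plan is to prove the three assertions in sequence, with the normal-core identification being the main content and the rest flowing from Theorem~\ref{thm:G00} combined with classical local stability theory.

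First, for $G^{00}_\delta=G^0_\delta$, I would appeal to the local stability theory of Hrushovski--Pillay as used in \cite{CPT}. Since $\delta$ is stable, $G^0_\delta$ is itself $\delta$-definable of finite index, so $G^{00}_\delta\subseteq G^0_\delta$ is immediate. Conversely, in local stability every $\delta$-type-definable bounded-index subgroup of $G$ is a finite intersection of $\delta$-definable finite-index subgroups, each of which contains $G^0_\delta$; hence $G^0_\delta$ lies inside every such bounded-index subgroup, giving $G^0_\delta\subseteq G^{00}_\delta$.

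Next, I would introduce $N:=\bigcap_{a\in G}aG^0_\delta a^{\nv 1}$, the normal core of $G^0_\delta$, and check that $N$ is $\delta^r$-definable of finite index. The key ingredient is that left-invariance of $\delta$ converts conjugates of $\delta$-formulas into $\delta^r$-formulas: writing $\delta(a^{\nv 1} xa;\bbar)=\delta(xa;\cbar)$ for a suitable $\cbar$ (applying invariance to $a^{\nv 1}$), this equals $\delta^r(x;\cbar,a)$. So each conjugate $aG^0_\delta a^{\nv 1}$ is $\delta^r$-definable of finite index, and since $G^0_\delta$ has finite index only finitely many distinct conjugates appear; hence $N$ is a finite intersection of $\delta^r$-definable finite-index subgroups, and is itself $\delta^r$-definable of finite index.

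The heart of the argument is to show $G^*_\delta=N$. Here I would invoke the classical local-stability description of generic $\delta$-types: for stable invariant $\delta$, there is a unique generic $\delta$-type $p_0$ concentrating on $G^0_\delta$, with $\Stab(p_0)=G^0_\delta$, and every other generic $\delta$-type is a left translate $ap_0$ (concentrating on the coset $aG^0_\delta$), so that $\Stab(ap_0)=aG^0_\delta a^{\nv 1}$. Combining this with Theorem~\ref{thm:G00}$(b)$ gives
\[
G^*_\delta=\bigcap_{p\in S^g_\delta(G)}\Stab(p)=\bigcap_{a\in G}aG^0_\delta a^{\nv 1}=N,
\]
and then Theorem~\ref{thm:G00}$(d)$ yields $G^{00}_{\delta^r}=G^*_\delta=N$. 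Finally, the chain $G^{00}_{\delta^r}\subseteq G^0_{\delta^r}\subseteq N$ (the first containment because every $\delta^r$-definable finite-index subgroup is $\delta^r$-type-definable of bounded index, the second from $N$ being $\delta^r$-definable of finite index) forces $G^0_{\delta^r}=G^{00}_{\delta^r}=N$.

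The main obstacle I expect is carefully justifying the local stability facts about $p_0$ in the pseudofinite setting: that $G^0_\delta$ carries a unique generic $\delta$-type and that its stabilizer is exactly $G^0_\delta$. These are classical consequences of canonical parameters and the Hrushovski--Pillay analysis, but fitting them cleanly into the local/pseudofinite framework used here takes some care.
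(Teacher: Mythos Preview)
Your proposal is correct and the central step---computing $\Stab(p)=aG^0_\delta a^{-1}$ for each generic $p\in S_\delta(G)$ concentrating on $aG^0_\delta$, then invoking Theorem~\ref{thm:G00}$(b)$,$(d)$ to identify $G^{00}_{\delta^r}$ with the normal core---is exactly what the paper does. The differences are in the surrounding logistics. You prove $G^{00}_\delta=G^0_\delta$ first by citing a local-stability fact (every $\delta$-type-definable bounded-index subgroup is an intersection of $\delta$-definable finite-index ones); the paper instead derives it \emph{last}, as a consequence of the $\delta^r$ result: once $G^{00}_{\delta^r}=G^0_{\delta^r}$ has finite index, any $\delta$-type-definable bounded-index $H$ contains it, hence is a finite union of its cosets, hence is definable, hence $\delta$-definable by compactness. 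This avoids having to locate and justify the stability fact you cite. Similarly, your direct verification that $N$ is $\delta^r$-definable (via showing conjugates of $\delta$-instances are $\delta^r$-instances) is not needed in the paper: once $G^{00}_{\delta^r}=N$ is established, $\delta^r$-definability follows from $\delta^r$-type-definability plus finite index and compactness. So the paper's ordering is slightly more economical, but your route is perfectly sound and makes the $\delta^r$-definability of conjugates explicit, which is a useful observation in its own right.
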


\begin{proof}
We first claim that, for any generic $\delta$-type $p\in S_\delta(G)$, if $p\models aG^{0}_\delta$ then $\Stab(p)= aG^{0}_\delta a\inv$. Indeed, fix $p\in S_\delta(G)$ generic and let $p\models aG^{0}_\delta$. If $g\in \Stab(p)$ then $p=gp\models gaG^{0}_\delta$, and so $gaG^{0}_\delta=aG^{0}_\delta$, i.e. $g\in aG^{0}_\delta a\inv$. Conversely, if $g\in aG^{0}_\delta a\inv$ then $ga G^{0}_\delta=aG^{0}_\delta$, and so $gp=p$ by \cite[Theorem 2.3$(ii)$]{CPT}. 

By the previous claim, and parts $(b)$ and $(d)$ of Theorem \ref{thm:G00},  we conclude that $G^{00}_{\delta^r}$ is the normal core of $G^0_{\delta}$, and therefore $G^{00}_{\delta^r}$ is $\delta^r$-definable of finite index. This further implies that $G^{00}_{\delta^r}=G^0_{\delta^r}$. 

It remains to show $G^{00}_\delta=G^0_\delta$. So suppose $H$ is a $\delta$-type-definable subgroup of $G$. Then $G^0_{\delta^r}=G^{00}_{\delta^r}\seq H$, and so $H$ is a union of cosets of $G^0_{\delta^r}$. Since $G^0_{\delta^r}$ has finite index, $H$ is definable. By compactness, $H$ is $\delta$-definable, and so $G^0_\delta\seq H$.
\end{proof}

\section{The local logic topology and $G^0$}\label{sec:G0}

Recall from \cite[Lemma 2.7]{PilCLG} that, if $\Gamma$ is a type-definable bounded-index subgroup of $G$, then we have the \emph{logic topology} on $G/\Gamma$ under which $G/\Gamma$ is a compact (Hausdorff) space, and a topological group when $\Gamma$ is normal. In particular, $X\seq G/\Gamma$ is closed if $\{x\in G:x\Gamma\in X\}$ is type-definable. In this section we show that if $\Gamma$ is $\delta$-type-definable, for some invariant $\delta(x,\ybar)$, then it suffices to consider $\delta$-type-definable sets in the construction of the logic topology on $G/\Gamma$. Certain aspects of this are likely in the folklore, and so some proofs will be brief.

\begin{lemma}\label{lem:haus}
Fix an invariant formula $\delta(x,\ybar)$ and suppose $\Gamma\leq G$ is $\delta$-type-definable of bounded index. Then, for any $\cL(G)$-formula $\phi(x)$, the set 
\[
X=\{a\in G:a\Gamma\cap\phi(G)\neq\emptyset\}
\]
is $\delta$-type-definable.
\end{lemma}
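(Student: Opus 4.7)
The plan is to localize the standard logic-topology argument by passing to a small elementary substructure $M\prec G$ over which the coset of each element becomes $\delta^r$-type-definable, so that $X$ is a union of complete $\delta^r$-types over $M$; one then checks that the corresponding set of types is closed in $S_{\delta^r}(M)$.

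Fix a presentation $\Gamma=\bigcap_{i\in I}\psi_i(G)$ as an intersection of $\delta^r$-formulas. Since $\Gamma$ has bounded index, a transversal $R$ of left cosets of $\Gamma$ has bounded cardinality, and by saturation we may choose a small $M\prec G$ containing $R$, the parameters of $\phi$, and the parameters of the $\psi_i$. For each $c\in R$, the condition ``$a\in c\Gamma$'' unfolds as $\bigwedge_i\psi_i(c\inv a)$, and by left invariance of $\delta$ each $\psi_i(c\inv x)$ is equivalent over $M$ to a $\delta^r$-formula in $x$ (the invariance witnesses lie in $M$ by elementarity). Hence ``$a\in c\Gamma$'' is a partial $\delta^r$-type over $M$, so $\tp_{\delta^r}(a/M)$ uniquely determines the left coset of $a$. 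Since $X=\phi(G)\Gamma$ is a union of such cosets, we obtain $X=\bigcup_{p\in P}p(G)$, where $P\seq S_{\delta^r}(M)$ consists of the complete $\delta^r$-types whose associated coset meets $\phi(G)$.

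It remains to show that $P$ is closed in $S_{\delta^r}(M)$, since then $X=\Theta(G)$ for the corresponding partial $\delta^r$-type $\Theta$. Fix $p\notin P$ and let $c\in R$ be the coset representative of $p$, so that $c\Gamma\cap X=\emptyset$. Asserting $c\Gamma\cap\phi(G)\Gamma\neq\emptyset$ is equivalent to the consistency of the partial type
\[
\{\phi(b)\}\cup\{\psi_i(c\inv a):i\in I\}\cup\{\psi_i(b\inv a):i\in I\}
\]
in the variables $(a,b)$; the inconsistency of this type together with saturation yields a finite $F\seq I$ with $c\psi_F(G)\cap\phi(G)\psi_F(G)=\emptyset$, where $\psi_F:=\bigwedge_{i\in F}\psi_i$. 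Set $\sigma(x):=\psi_F(c\inv x)$, which is a $\delta^r$-formula over $M$ by the same invariance argument. Since $\Gamma\seq\psi_F(G)$, we have $c\Gamma\seq\sigma(G)$, so $\sigma\in p$; and $\sigma(G)\cap X\seq c\psi_F(G)\cap\phi(G)\psi_F(G)=\emptyset$, so $\sigma$ separates $p$ from every member of $P$. Hence $P$ is closed.

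The main obstacle is this final separation step: converting the coset-level disjointness $c\Gamma\cap X=\emptyset$, an a priori statement about two merely type-definable sets, into a single $\delta^r$-definable separation. The difficulty is that $X$ may be a union of infinitely many cosets and $\phi$ itself is not assumed $\delta^r$, so taking a Boolean combination of coset-separators over finitely many cosets is not available; it is precisely the combination of saturation with the left invariance of $\delta$ that delivers the required $\sigma$.
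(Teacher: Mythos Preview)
Your proof is correct and shares the paper's overall strategy: pass to a small $M\prec G$, use left-invariance of $\delta$ to see that each left coset $c\Gamma$ is $\delta^r$-type-definable over $M$ (so that $\tp_{\delta^r}(\cdot/M)$ determines membership in $X$), and then argue that the relevant set of complete $\delta^r$-types over $M$ is closed.

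The execution of the closedness step differs. The paper introduces the auxiliary notion of a $(\delta^r)^{-1}$-formula, takes $M$ to be $|A|^+$-saturated, and argues by element-chasing: given $a\notin X$ and $b\equiv^{\delta^r}_M a$ with $b\Gamma\cap\phi(G)\neq\emptyset$, the saturation of $M$ produces a witness $m\in M$ in this intersection, and then the formulas $\psi_j(m^{-1}x)$ (which are $\delta^r$ over $M$ by invariance) transfer the contradiction to $a$. You instead apply compactness in $G$ directly to the two-variable partial type encoding $c\Gamma\cap\phi(G)\Gamma\neq\emptyset$, extracting a finite $F$ and hence a single separating $\delta^r$-formula $\sigma(x)=\psi_F(c^{-1}x)$. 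This buys you two small simplifications: no saturation requirement on $M$ and no need for the $(\delta^r)^{-1}$ bookkeeping. One verbal slip: the phrase ``by saturation we may choose a small $M\prec G$'' should really invoke downward L\"owenheim--Skolem; saturation of $G$ is used only later, in the compactness step producing $F$.
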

\begin{proof}
First, since $\Gamma$ is $\delta$-type-definable, it follows from saturation of $G$ that $X$ is type-definable (\emph{a priori}, using $\phi(x)$ and existential quantification over $\delta$-formulas). We need to show that $X$ is type-definable by $\delta$-formulas. By a \emph{$\delta\inv$-formula} we mean a formula of the form $\phi(x\inv)$ where $\phi(x)$ is a $\delta$-formula. A \emph{$\delta\inv$-type} is a small consistent set of $\delta\inv$-formulas. 

Note that $\Gamma$ is $\delta\inv$-type-definable since $\Gamma\inv=\Gamma$. Since $\Gamma$ has bounded index, we may fix a sequence $(p_i)_{i<\kappa}$ of left translates of $\delta\inv$-types, with $\kappa$ small, such that any coset of $\Gamma$ is definable by some $p_i$, and $p_0$ is a $\delta\inv$-type defining $\Gamma$. Let $A\subset G$ be a small set such that each $p_i$ is over $A$ and $\phi(x)$ is over $A$. Let $M\prec G$ be a small $|A|^+$-saturated model. Now let $S=\{p\in S_{\delta}(M):X\cap p(G)=\emptyset\}$. We show 
\begin{equation*}
X=\bigcap_{p\in S}\bigcup_{\psi(x)\in p}\neg\psi(G).\tag{$\dagger$}
\end{equation*}
By saturation and  type-definability of $X$, this will show that $X$ is $\delta$-type-definable.

By choice of $S$, the left-to-right containment in $(\dagger)$ is clear. So suppose $a\not\in X$ and let $p=\tp_{\delta}(a/M)$. It suffices to show $p\in S$. So suppose, toward a contradiction, that we have $b\in X\cap p(G)$. Then $b\Gamma\cap\phi(G)\neq\emptyset$. In particular, if $p_i$ is the type-definition of $b\Gamma$, then $p_i(x)\wedge\phi(x)$ is consistent, and thus realized by some $m\in M$. Then $p_0(b\inv m)$ holds and so, since $p_0$ is a $\delta\inv$-type, $m\inv\in M$, and $b\models p$, it follows that $p_0(a\inv m)$ holds. But then $m\in a\Gamma\cap\phi(G)$, contradicting that $a\not\in X$. 
\end{proof}

\begin{corollary}\label{cor:deltartop}
Fix an invariant formula $\delta(x;\ybar)$ and suppose $\Gamma\leq G$ is a $\delta$-type-definable of bounded index. Then $X\seq G/\Gamma$  is closed in the logic topology if and only if $\{a\in G:a\Gamma\in X\}$ is $\delta$-type-definable. 
\end{corollary}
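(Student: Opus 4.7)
The ``if'' direction is automatic: any $\delta^r$-type-definable set is in particular type-definable, so the general definition of the logic topology recalled at the start of the section already delivers that $X$ is closed. All the content is in the ``only if'' direction. So fix $X \seq G/\Gamma$ closed, and let $Y := \{a \in G : a\Gamma \in X\}$. By hypothesis $Y$ is type-definable, and since $Y$ is the preimage of a set in $G/\Gamma$ it satisfies $Y\Gamma = Y$, i.e.\ it is a union of left cosets of $\Gamma$. I want to upgrade this to $\delta^r$-type-definability, which by saturation of $G$ reduces to the following separation statement: for each $a \notin Y$, exhibit a single $\delta^r$-formula $\psi(x)$ with $Y \seq \psi(G)$ and $a \notin \psi(G)$.

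The key idea is to pass from an arbitrary separating formula to its ``coset-saturation.'' Given $a \notin Y$, pick an $\cL(G)$-formula $\phi(x)$ from the partial type defining $Y$ such that $a \notin \phi(G)$, and consider $Y_\phi := \{b \in G : b\Gamma \seq \phi(G)\}$. Because $Y\Gamma = Y \seq \phi(G)$, we still have $Y \seq Y_\phi$; because $a \in a\Gamma$ and $a \notin \phi(G)$, we have $a \notin Y_\phi$. The complement $G \setminus Y_\phi = \{b : b\Gamma \cap \neg\phi(G) \neq \emptyset\}$ is exactly the sort of set to which Lemma \ref{lem:haus} applies (with $\neg\phi$ in place of $\phi$), so it is $\delta^r$-type-definable; write it as $\bigcap_j \theta_j(G)$ for $\delta^r$-formulas $\theta_j$.

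To finish, observe that $Y$ is disjoint from $\bigcap_j \theta_j(G)$, so the partial type defining $Y$ together with $\{\theta_j(x) : j\}$ is inconsistent. By saturation a finite conjunction $\theta_{j_1} \wedge \cdots \wedge \theta_{j_n}$ is already inconsistent with the type of $Y$, which means $Y \seq \neg(\theta_{j_1} \wedge \cdots \wedge \theta_{j_n})(G)$; meanwhile $a \in G \setminus Y_\phi$ gives $a \in \theta_{j_i}(G)$ for each $i$, so $a$ fails the negation. Thus $\psi := \neg(\theta_{j_1} \wedge \cdots \wedge \theta_{j_n})$ is the desired $\delta^r$-separator. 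The only nontrivial ingredient is Lemma \ref{lem:haus}; the only step that is not bookkeeping is recognizing that, in order to use it, one must first replace $\phi$ by $Y_\phi$, which requires (and is enabled by) $Y$ being a union of cosets of $\Gamma$.
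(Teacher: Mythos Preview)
Your argument is correct in substance and takes a more direct route than the paper. The paper argues abstractly: it shows that the ``$\delta^r$-closed'' sets form a compact Hausdorff topology on $G/\Gamma$ (with Lemma~\ref{lem:haus} supplying exactly the Hausdorff separation), and then invokes the general fact that a compact Hausdorff topology cannot be strictly refined by another, so the $\delta^r$-topology and the logic topology coincide. Your approach instead works set by set: given closed $X$ with preimage $Y$, you sandwich a $\delta^r$-formula between $Y$ and each formula $\phi$ in the partial type defining $Y$, via Lemma~\ref{lem:haus} applied to $\neg\phi$ and a compactness argument. Both proofs use Lemma~\ref{lem:haus} as the only nontrivial input; yours is more explicit, while the paper's packages the work into a single topological observation that also makes Corollary~\ref{cor:continuous} immediate.

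One point to tighten. Your stated reduction---``by saturation it suffices to separate $Y$ from each $a\notin Y$ by a $\delta^r$-formula''---does not by itself give $\delta^r$-type-definability in the paper's sense, which requires a \emph{bounded} family of $\delta^r$-formulas; the collection of all pointwise separators may have size $|G|$. Fortunately your construction yields more than you claim: since $G\setminus Y_\phi=\bigcap_j\theta_j(G)$, each $\neg\theta_j(G)\subseteq Y_\phi$, and hence your $\psi=\neg(\theta_{j_1}\wedge\cdots\wedge\theta_{j_n})$ satisfies $Y\subseteq\psi(G)\subseteq Y_\phi\subseteq\phi(G)$. So $\psi$ depends only on $\phi$ (not on $a$), and letting $\phi$ range over the small partial type defining $Y$ gives a bounded family of $\delta^r$-formulas whose intersection is exactly $Y$. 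You should state this explicitly rather than framing the reduction pointwise.
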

\begin{proof}
Call $X\seq G/\Gamma$ \emph{$\delta$-closed} if $\{a\in G:a\Gamma\in X\}$ is $\delta$-type-definable. It suffices to show that the $\delta$-closed sets define a compact Hausdorff topology on $G/\Gamma$. Indeed, given this, since the logic topology clearly refines the $\delta$-topology, it will follow that the two topologies are the same. 

The verification that the $\delta$-closed sets generate a compact topology is exactly as in the usual case of the logic topology (\cite[Lemma 3.3]{LaPi} or \cite[Lemma 2.5]{PilCLG}). Moreover, Lemma \ref{lem:haus} is precisely what is needed to show that the standard argument of Hausdorff separation goes through. 
\end{proof}

Now, if $\delta(x;\ybar)$ is a formula and $\Gamma\leq G$ is $\delta$-definable of bounded index then, for any $p\in S_{\delta}(G)$ there is a unique left coset $C$ of $\Gamma$ such that $p\models C$. So we have a well-defined function $\pi_\Gamma\colon S_{\delta}(G)\to G/\Gamma$ such that $p\models \pi_\Gamma(p)$. The following conclusion is a straightforward from Corollary \ref{cor:deltartop}.

\begin{corollary}\label{cor:continuous}
Fix an invariant formula $\delta(x;\ybar)$ and suppose $\Gamma\leq G$ is $\delta$-type-definable of bounded index. Then $\pi_\Gamma$ is continuous.
\end{corollary}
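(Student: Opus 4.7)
The plan is to unwind both definitions and observe that the preimage of a closed set is cut out by the same $\delta^r$-type that witnesses closure downstairs. Fix a closed set $X \subseteq G/\Gamma$ in the logic topology. By Corollary \ref{cor:deltartop}, the set $Y := \{a \in G : a\Gamma \in X\}$ is $\delta^r$-type-definable, say by a partial type $\pi(x) = \{\psi_i(x) : i \in I\}$ consisting of $\delta^r$-formulas with parameters from $G$.

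Next I would verify that $\pi_\Gamma^{-1}(X) = \{p \in S_{\delta^r}(G) : \pi(x) \subseteq p\}$. For the forward inclusion, suppose $p \in \pi_\Gamma^{-1}(X)$ and let $a \in G$ realize $p$; then $a\Gamma = \pi_\Gamma(p) \in X$, so $a \in Y$, hence $a \models \pi(x)$, and thus $\pi(x) \subseteq p$. The reverse inclusion is symmetric: a realization $a$ of $p$ lies in $Y$, so $a\Gamma \in X$, and $\pi_\Gamma(p) = a\Gamma$ lies in $X$ by well-definedness of $\pi_\Gamma$ (established via Proposition \ref{prop:G00tech}$(c)$ applied to the $\delta^r$-type-definable bounded-index subgroup $\Gamma$).

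Finally, each set $[\psi_i] := \{p \in S_{\delta^r}(G) : \psi_i(x) \in p\}$ is clopen in the Stone space $S_{\delta^r}(G)$, so $\pi_\Gamma^{-1}(X) = \bigcap_{i \in I} [\psi_i]$ is closed. Since $X$ was an arbitrary closed subset of $G/\Gamma$, this shows $\pi_\Gamma$ is continuous.

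I do not expect any substantive obstacle here: the content is entirely packaged in Corollary \ref{cor:deltartop} (which identifies the logic topology with the $\delta^r$-topology on $G/\Gamma$) and Proposition \ref{prop:G00tech}$(c)$ (which ensures $\pi_\Gamma$ is well-defined). The only mildly delicate point is being careful about parameters: the formulas $\psi_i$ may use parameters from all of $G$, but this is harmless since $S_{\delta^r}(G)$ is precisely the space of complete $\delta^r$-types over $G$, making every such $[\psi_i]$ a basic clopen.
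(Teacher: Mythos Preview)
Your approach matches the paper's (which merely asserts the result is straightforward from Corollary~\ref{cor:deltartop}), and identifying that corollary as the key input is exactly right. However, there is a genuine gap in the execution: you write ``let $a \in G$ realize $p$'', but a complete $\delta^r$-type over all of $G$ need not be realized in $G$ itself---$G$ is the saturated model, and global types are in general only realized in a proper elementary extension. Passing to such an extension does not immediately rescue the argument either, since assertions like ``$a\Gamma \in X$'' (with $X\subseteq G/\Gamma$) then lose their literal meaning.

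The fix is to work over a small model. Since $\Gamma$ has bounded index, all its left cosets are $\delta^r$-type-definable over some small $M\prec G$; this is the content of the argument behind Proposition~\ref{prop:G00tech}$(c)$, though note that proposition is stated only for $G^*_\delta$, while well-definedness of $\pi_\Gamma$ for general $\Gamma$ is asserted separately in the paragraph preceding Corollary~\ref{cor:continuous}. One then checks that $Y$, being a union of cosets and hence $\operatorname{Aut}(G/M)$-invariant as well as $\delta^r$-type-definable, is in fact $\delta^r$-type-definable over $M$. With the partial type $\pi$ taken over $M$, you may legitimately realize $p|_M$ by some $a\in G$ using saturation, after which both inclusions go through exactly as you wrote them.
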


\begin{remark}
Lemma \ref{lem:haus}, Corollary \ref{cor:deltartop}, and Corollary \ref{cor:continuous} hold for any (sufficiently saturated) $\cL$-structure $G$ expanding a group (i.e. $G$ need not be pseudofinite).
\end{remark}

As a special case of the above situation, we can work with $\Gamma=G^{00}_{\delta^r}$, where $\delta(x;\ybar)$ is invariant and NIP. We have already shown that, for $\delta(x,\ybar)$ invariant and NIP, $G^{00}_{\delta^r}$ behaves like the ``type-definable connected component" of $G$ localized at the formula $\delta^r$.
 Next, we show that $G^0_{\delta^r}$ fits into this picture the way one would expect from known facts in the global NIP setting. 

\begin{corollary}\label{cor:G00connect}
Suppose $\delta(x;\ybar)$ is invariant and NIP. Then $G^0_{\delta^r}$ is a normal $\delta^r$-type-definable subgroup of $G$ of bounded index, and $G^0_{\delta^r}/G^{00}_{\delta^r}$ is the connected component of the identity in $G/G^{00}_{\delta^r}$.
\end{corollary}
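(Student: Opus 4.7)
The plan is to first establish the structural properties of $G^0_{\delta^r}$, then identify the open subgroups of $G/G^{00}_{\delta^r}$ in the local logic topology (Corollary \ref{cor:deltartop}), and finally invoke the standard fact that the connected component of the identity in a compact Hausdorff topological group is the intersection of its open subgroups. For the first assertion, observe that $G^0_{\delta^r}$, being an intersection of subgroups, is a subgroup, and as an intersection of $\delta^r$-definable sets it is $\delta^r$-type-definable. Since every $\delta^r$-definable finite-index subgroup is in particular a $\delta^r$-type-definable subgroup of bounded index, one has $G^{00}_{\delta^r}\seq G^0_{\delta^r}$, so $G^0_{\delta^r}$ has bounded index by Theorem \ref{thm:G00}$(a)$, and the quotient $G^0_{\delta^r}/G^{00}_{\delta^r}$ is well-defined.

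Set $K:=G/G^{00}_{\delta^r}$, equipped with the logic topology. By Corollary \ref{cor:deltartop}, a subset of $K$ is closed iff its preimage in $G$ is $\delta^r$-type-definable. I claim that a subgroup $H/G^{00}_{\delta^r}$ of $K$ is open iff $H$ is $\delta^r$-definable of finite index in $G$. For the forward direction, open subgroups of any topological group are clopen, so both $H/G^{00}_{\delta^r}$ and its complement are closed, which gives that $H$ and $G\setminus H$ are both $\delta^r$-type-definable; a standard saturation/compactness argument then shows $H$ is $\delta^r$-definable, and compactness of $K$ forces $H/G^{00}_{\delta^r}$ (hence $H$) to have finite index. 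For the converse, if $H$ is $\delta^r$-definable of finite index then $H$ is $\delta^r$-type-definable of bounded index, so $G^{00}_{\delta^r}\seq H$, and both $H$ and the finite union of cosets $G\setminus H$ are $\delta^r$-definable, making $H/G^{00}_{\delta^r}$ clopen.

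For the second assertion, let $C_e$ denote the connected component of the identity in $K$. Being compact Hausdorff, $K$ has $C_e$ as a closed normal subgroup with $K/C_e$ totally disconnected, hence profinite by van Dantzig's theorem, and the intersection of all open subgroups of a profinite group is trivial. Pulling this back through the quotient $K\to K/C_e$, and using that $C_e$ is contained in every clopen (hence every open) subgroup of $K$ since $C_e$ is connected, we conclude that $C_e$ equals the intersection of all open subgroups of $K$. By the claim from the previous paragraph, this intersection is exactly $G^0_{\delta^r}/G^{00}_{\delta^r}$, finishing the proof. The main technical point is the compactness argument in the second paragraph identifying $\delta^r$-definable subgroups of $G$ with clopen subgroups of $K$; the rest is bookkeeping around the standard topological structure theory of compact Hausdorff groups.
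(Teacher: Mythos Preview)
Your proof is correct and follows essentially the same strategy as the paper's: both identify the connected component of $G/G^{00}_{\delta^r}$ with the intersection of its open subgroups, and both characterize those open subgroups as precisely the $\delta^r$-definable finite-index subgroups of $G$ (via Corollary~\ref{cor:deltartop} and compactness). The execution differs slightly: you invoke van Dantzig's theorem to conclude that $C_e$ is the intersection of open subgroups, whereas the paper argues more directly by taking, for each $a\notin K$, a clopen neighborhood $X$ of the identity not containing $[a]_\delta$, and then passing to the setwise stabilizer $H=\{g:gA=A,\ g^{-1}A=A\}$ of its pullback $A$ to obtain a $\delta^r$-definable finite-index subgroup excluding $a$. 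Your route is cleaner and more conceptual; the paper's is more self-contained. One small point of order in your first paragraph: the claim that $G^0_{\delta^r}$ is $\delta^r$-type-definable ``as an intersection of $\delta^r$-definable sets'' needs the intersection to be bounded, which follows only after you observe $G^{00}_{\delta^r}\subseteq H$ for each such $H$ (so there are only boundedly many distinct $H$); you have all the ingredients, just slightly out of sequence.
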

\begin{proof}
Let $C\seq G/G^{00}_{\delta^r}$ be the connected component of the identity, and recall that $C$ is a closed subgroup of $G/G^{00}_{\delta^r}$. Let $K$ be the pullback of $C$ to $G$. Then $K$ is a normal $\delta^r$-type-definable bounded-index subgroup of $G$ containing $G^{00}_{\delta^r}$. We also have $C=K/G^{00}_{\delta^r}$. Altogether, to prove the result, it suffices to show $K=G^0_{\delta^r}$.

We first show $K\seq G^0_{\delta^r}$. Let $H\leq G$ be $\delta^r$-definable of finite index. We have $G^{00}_{\delta^r}\leq H$, and $H/G^{00}_{\delta^r}$ is a clopen subgroup of $G/G^{00}_{\delta^r}$ containing the identity. So $C\seq H/G^{00}_{\delta^r}$, i.e., $K\seq H$.

Now, to prove $G^0_{\delta^r}\seq K$, fix $a\not\in K$. There is $X\seq G/G^{00}_{\delta^r}$ clopen such that $X=X\inv$, $aG^{00}_{\delta^r}\not\in X$, and $G^{00}_{\delta^r}\in X$. Let $A$ be the pullback of $X$ to $G$, and note that $a\not\in A$ and $A$ is definable. Let $H=\{g\in G:gA=A\}$. Then $H$ is a definable subgroup of $G$, and $G^{00}_{\delta^r}\seq H\seq A$. In particular, $H$ has finite index, and is $\delta^r$-definable by Corollary \ref{cor:deltartop}. Since $a\not\in H$, we have shown $a\not\in G^0_{\delta^r}$.
\end{proof}

\section{Generic compact domination}\label{sec:GCD}

Throughout this section, we fix an invariant NIP  formula $\delta(x;\ybar)$. 
 
 \begin{definition}$~$
 \begin{enumerate}
 \item Let $H_\delta$ denote the compact Hausdorff group $G/G^{00}_{\delta^r}$.
\item Let $\eta_\delta$ denote the normalized Haar measure on $H_\delta$.
\item Given $a\in G$, let $[a]_\delta$ denote the element $aG^{00}_{\delta^r}$ in $H_\delta$.
 \item  Let $\pi_\delta$ denote the map $\pi_{G^{00}_{\delta^r}}\colon S_{\delta^r}(G)\to H_\delta$ defined before Corollary \ref{cor:continuous}.
 \item Given $\alpha\in H_\delta$, define
 \[
 S^\alpha_{\delta^r}(G):=\pi_\delta\inv (\alpha)\cap S^g_{\delta^r}(G),
 \]
 i.e. $p\in S^\alpha_{\delta^r}(G)$ if and only if $p$ is a global generic $\delta^r$-type containing the $\delta^r$-type-definition of $\alpha$ (as a coset of $G^{00}_{\delta^r}$).
 \item  Given a $\delta^r$-formula $\phi(x)$, define 
\[
E_{\phi}=\{\alpha\in H_\delta: S^\alpha_{\delta^r}(G)\cap \phi(x)\neq\emptyset\text{ and }S^\alpha_{\delta^r}(G)\cap\neg\phi(x)\neq\emptyset\},
\]
where we identify a $\delta^r$-formula with a clopen set of types in $S_{\delta^r}(G)$.
 \item Given a $\delta^r$-formula $\phi(x)$ and a generic type $p\in S_{\delta^r}(G)$, define 
 \[
 U^p_{\phi}=\{[a]_\delta\in H_\delta: \phi(x)\in ap\}.
 \]
 \end{enumerate}
 \end{definition} 

Note that, in the previous definition, $U^p_{\phi}$ is well-defined since $G^{00}_{\delta^r}= \Stab(p)$ by Theorem \ref{thm:G00}$(c)$.

\begin{theorem}\label{thm:GCD}
For any $\delta^r$-formula $\phi(x)$, $E_{\phi}$ is closed and $\eta_\delta(E_{\phi})=0$.
\end{theorem}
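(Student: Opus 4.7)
The plan is to prove closedness topologically and the measure-zero statement via uniqueness of $\mu$ (Theorem \ref{thm:meas}) coupled with a local Fubini-style argument in the vein of Simon \cite{SimGCD}. For closedness, I would write $E_\phi = A_\phi \cap A_{\neg\phi}$, where $A_\psi := \pi_\delta(\psi(x) \cap S^g_{\delta^r}(G))$ for $\psi \in \{\phi,\neg\phi\}$. By Proposition \ref{prop:deltarfacts}(b), the set of non-generic $\delta^r$-types is the union of clopen sets $\psi(x)$ with $\mu(\psi)=0$, hence open, so $S^g_{\delta^r}(G)$ is closed and therefore compact. Intersecting with the clopen $\phi(x)$ preserves compactness, and its image under the continuous map $\pi_\delta$ (Corollary \ref{cor:continuous}) is closed in the compact Hausdorff group $H_\delta$; the same holds for $A_{\neg\phi}$.

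For the measure-zero part, I would first observe that every $\alpha \in H_\delta$ has some generic type above it: fix any generic $p_0 \in S^g_{\delta^r}(G)$ (which exists by Proposition \ref{prop:deltarfacts}(c)) and choose $a \in G$ with $[a]_\delta = \alpha\,\pi_\delta(p_0)\inv$, so $ap_0$ is generic (Proposition \ref{prop:deltarfacts}(c)) and concentrates on $\alpha$. That type contains $\phi$ or $\neg\phi$, so $H_\delta = A_\phi \cup A_{\neg\phi}$, and inclusion-exclusion gives
\[
\eta_\delta(E_\phi) = \eta_\delta(A_\phi) + \eta_\delta(A_{\neg\phi}) - 1.
\]
It therefore suffices to prove $\eta_\delta(A_\phi) = \mu(\phi)$ (the $\neg\phi$ case is symmetric). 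For the lower bound I would fix $p_0$ and note that, since $\Stab(p_0) = G^{00}_{\delta^r}$ by Theorem \ref{thm:G00}(c), the natural extension of $U^{p_0}_\phi$ to every $\delta^r$-formula $\psi$ is well-defined, and the assignment $\psi \mapsto \eta_\delta(U^{p_0}_\psi)$ is a left-invariant finitely additive probability measure on $\cB_{\delta^r}(G)$ (same computation as in Section \ref{sec:meas}). Theorem \ref{thm:meas} then forces $\eta_\delta(U^{p_0}_\phi) = \mu(\phi)$, and since a suitable translate of $U^{p_0}_\phi$ sits inside $A_\phi$, this yields $\eta_\delta(A_\phi) \geq \mu(\phi)$.

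The hard step is the upper bound $\eta_\delta(A_\phi) \leq \mu(\phi)$. The plan is to adapt Simon's argument from \cite{SimGCD}: combine Lemma \ref{lem:approxtype} (approximating $\mu$ by finite averages of generic types) with Claim 2 from the proof of Lemma \ref{lem:unique} (uniform VC-type approximation across generic types and translations) to show that for any generic $q \in S^g_{\delta^r}(G)$, the set $U^q_\phi$ agrees with $U^{p_0}_\phi$ outside of an $\eta_\delta$-set of arbitrarily small measure. Writing $A_\phi = \bigcup_{q \in S^g_{\delta^r}(G)} U^q_\phi$, one then must replace this possibly uncountable union by a countable subcollection, using the Polish structure on $H_\delta$ (after the reduction to countable language as in the proof of Lemma \ref{lem:unique}), together with a measurable selection and the closedness of $A_\phi$. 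This passage from pointwise agreement to uniform control of the entire union $A_\phi$ is the main obstacle, and it is precisely the local analog of the Fubini/generic-stability input for $\mu$ on $\delta^r$-formulas that drives Simon's original argument.
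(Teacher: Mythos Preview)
Your closedness argument matches the paper's. The inclusion--exclusion reformulation $\eta_\delta(E_\phi)=\eta_\delta(A_\phi)+\eta_\delta(A_{\neg\phi})-1$ is correct, and the lower bound $\eta_\delta(A_\phi)\geq\mu(\phi)$ via $U^{p_0}_\phi$ is fine (though note that measurability of $U^{p_0}_\psi$ for \emph{all} $\delta^r$-formulas $\psi$ already needs the $F_\sigma$ input from \cite{SimRC}, as in Proposition~\ref{prop:Borel}; you are tacitly using this when you say the assignment $\psi\mapsto\eta_\delta(U^{p_0}_\psi)$ is a measure).

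The upper bound, however, has a real gap. The tools you cite---Lemma~\ref{lem:approxtype} and Claim~2 of Lemma~\ref{lem:unique}---only yield $\eta_\delta(U^q_\phi)=\eta_\delta(U^{p_0}_\phi)=\mu(\phi)$ for every generic $q$ (this is exactly Claim~3 of Lemma~\ref{lem:unique}, carried over to $\delta^r$). They do \emph{not} give that $\eta_\delta(U^q_\phi\smd U^{p_0}_\phi)$ is small: equal measure does not imply small symmetric difference. Worse, for $p_0,q$ both concentrating on the identity coset one has $U^q_\phi\smd U^{p_0}_\phi\subseteq E_\phi$ directly, so your intermediate claim is essentially equivalent to the theorem itself. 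The measurable-selection and Polish-structure gestures do not rescue this: even if $A_\phi$ were a countable union $\bigcup_n U^{q_n}_\phi$, nothing controls $\eta_\delta\bigl(\bigcup_n(U^{q_n}_\phi\setminus U^{p_0}_\phi)\bigr)$ from individual bounds on the summands.

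The paper avoids both problems. It fixes a single generic $p$ concentrating on $G^{00}_{\delta^r}$, shows directly that $E_\phi\subseteq\partial U^p_\phi$ (using that $\{gp:g\in G\}$ is dense in $S^g_{\delta^r}(G)$), and then proves $\eta_\delta(\partial U^p_\phi)=0$ by invoking Simon's structural result \cite[Theorem~2.7]{SimGCD}: for a finite-VC family of $F_\sigma$ sets with $F_\sigma$ complement in which every point is ``heavy'' (every neighborhood meets the set in positive measure---this is the content of the paper's Claim~2), each member has null boundary. That combinatorial theorem is exactly what replaces your missing uniform control over the union, and there does not appear to be a soft substitute for it.
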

\begin{proof}
The proof adapts parts of \cite{ChSi} and \cite{SimGCD}, and relies on the main results of \cite{SimRC} and \cite{SimGCD}. Throughout the proof we will use the fact that, for any $\delta^r$-formula $\phi(x)$, the formula $\phi(x\cdot y)$ is NIP (see Proposition \ref{prop:deltarfacts}$(a)$).  

First, we observe that $E_{\phi}=\pi_\delta(S^g_{\delta^r}(G)\cap \phi(x))\cap\pi_\delta(S^g_{\delta^r}(G)\cap\neg\phi(x))$. Thus $E_{\phi}$ is closed since $S^g_{\delta^r}(G)$, $\phi(x)$, and $\neg\phi(x)$ are closed, and $\pi_\delta$ is a continuous map between compact Hausdorff spaces. \medskip

\noindent\emph{Claim 1}: If $\phi(x)$ is a $\delta^r$-formula and $p\in S_{\delta^r}(G)$ is generic, then both $U^p_{\phi}$ and its complement are $F_\sigma$ subsets of $H_\delta$. 

\noindent\emph{Proof}: 
We are going to use \cite{SimRC}, which requires a countable theory. So let $T$ be the complete theory of $G$ in the language containing the group operation, $\phi(x)$, and constants for parameters in $\phi(x)$. For the rest of the proof we work in $T$, and view $\phi(x)$ as a formula with no parameters. Let $\theta(x;y_1,y_2)$ be the formula $\phi(y_1\cdot x)\wedge\neg\phi(y_2\cdot x)$, and note that $\theta(x;y_1,y_2)$ is NIP. By Corollary \ref{cor:fsg}, we may find a countable model $M\prec G$ such that $\theta(x;y_1,y_2)$ is over $M$ and, for any $a_1,a_2\in G$, if $\mu(\theta(x;a_1,a_2))>0$ then $\theta(x;a_1,a_2)$ is realized in $M$.

 Let $p_0$ be the global $\phi^\ell$-type obtained by restricting $p$ to instances of $\phi^\ell(x;y)$. Then $p_0$ is $M$-invariant. Indeed, if $a_1\equiv_M a_2$ and $\phi(a_1x)\wedge\neg\phi(a_2x)\in p_0$, then $\mu(\theta(x;a_1,a_2))>0$ since $p_0$ is generic, and so $\theta(x;a_1,a_2)$ is realized in $M$, a contradiction. By the main result of \cite{SimRC}, if $\Sigma=\{q\in S_y(M):\phi(ax)\in p_0 \text{ for } a\models q\}$ then both $\Sigma$ and its complement are $F_\sigma$  subsets of $S_y(M)$. Finally,
\[
\{a\in G:[a]_\delta\in U^p_{\phi}\}=\{a\in G:a\models q\text{ for some }q\in \Sigma\},
\]
and so we have the desired result by Corollary \ref{cor:continuous}. \claim{1}\medskip

By Claim 1, each generic $p\in S_{\delta^r}(G)$ induces a left-invariant finitely additive probability measure on $\delta^r$-formulas, by assigning the measure of a $\delta^r$-formula $\phi(x)$ to be $\eta_\delta(U^p_{\phi})$. (Therefore, by Theorem \ref{thm:unique}, we have $\eta_\delta(U^p_{\phi})=\mu(\phi)$ for any $\delta^r$-formula $\phi(x)$; however we will not need this for present proof.) For the rest of the proof, fix a generic type $p\in S_{\delta^r}(G)$ concentrating on $G^{00}_{\delta^r}$. \medskip

\noindent\emph{Claim 2}: For any $\delta^r$-formula $\phi(x)$ and any $\alpha\in U^p_{\phi}$, if $V\seq H_\delta$ is an open neighborhood of $\alpha$ then $\eta_\delta(U^p_{\phi}\cap V)>0$. 

\noindent\emph{Proof}: We follow the proof of Claim 2 of \cite[Theorem 4.2]{SimGCD}. Since $\pi_\delta\inv(\alpha)$ and $\pi_\delta\inv(\neg V)$ are disjoint closed subsets of $S_{\delta^r}(G)$, there is some $\delta^r$-formula $\psi(x)$ such that $\pi_\delta\inv(\alpha)\seq\psi(x)\seq\pi_\delta\inv(V)$. Fix $a\in \alpha$. Then $\psi(x)\in ap$, and so $\phi(x)\wedge\psi(x)\in ap$. Thus $\phi(x)\wedge\psi(x)$ is generic, and so 
\[
\eta_\delta(U^p_{\phi}\cap U^p_{\psi})=\eta(U^p_{\phi\wedge\psi})>0.
\]
Moreover, if $[g]_\delta\in U^p_{\psi}$ then $\psi(x)\in gp$, and so $[g]_\delta=\pi_\delta(gp)\in\pi(\psi(x))\seq V$. So $U^p_{\psi}\seq V$ and thus $\eta_\delta(U^p_{\phi}\cap V)>0$.\claim{2} \medskip

\noindent\emph{Claim 3}: For any $\delta^r$-formula $\phi(x)$ and generic type $p\in S_{\delta^r}(G)$, the set system $(H_\delta,\{\alpha U^p_{\phi}:\alpha\in H_\delta\})$ has finite VC-dimension.

\noindent\emph{Proof}:
This is similar to \cite[Lemma 3.19]{ChSi}. Suppose there are $\alpha_1,\ldots,\alpha_n\in H_\delta$ such that, for all $I\seq\{1,\ldots,n\}$ there is $\beta_I\in H_\delta$ with $\alpha_i\in \beta_IU^p_{\phi}$ if and only if $i\in I$. Let $\alpha_i=[a_i]_{\delta}$ and $\beta_I=[b_I]_\delta$, for $a_i,b_I\in G$. Then $[b_Ia_i\inv]_\delta\in U^p_{\phi}$ if and only if $i\in I$, i.e., $\phi(b_I\inv a_ix)\in p$ if and only if $i\in I$. Let $c$ realize the restriction of $p$ to the parameters $\{a_i\}_{i\leq n}\cup\{b_I\}_{I\seq\{1,\ldots,n\}}$. Then $G\models\phi^\ell(a_ic;b_I\inv)$ if and only if $i\in I$. So the result follows since $\phi^\ell(x;y)$ is NIP. \claim{3}\medskip

Recall that $H_\delta$ is second countable by Theorem \ref{thm:G00}$(e)$. 
Applying Claims 1, 2, and 3, and \cite[Theorem 3.7]{SimGCD}, we have $\eta_\delta(\partial U^p_{\phi})=0$. So, to prove the result, it suffices to show $E_{\phi}\seq\partial U^p_{\phi}$. The argument follows the proof of \cite[Theorem 5.3]{ChSi}. We first need one final claim.\medskip

\noindent\emph{Claim 4}: If $p\in S_{\delta^r}(G)$ is generic then $S^g_{\delta^r}(G)=\overline{\{gp:g\in G\}}$.

\noindent\emph{Proof}: (See also \cite{NewTD}.) Fix a generic type $p\in S_{\delta^r}(G)$. Note that $S^g_{\delta^r}(G)$ is closed, and clearly contains $gp$ for any $g\in G$. For the other containment, suppose $q\in S_{\delta^r}(G)$ is generic, and let $\psi(x)\in q$. We want to find $g\in G$ such that $\psi(x)\in gp$. Since $\psi(x)$ is generic there are $g_1,\ldots,g_n\in G$ such that $G=g_1\psi(G)\cup\ldots\cup g_n\psi(G)$, and so $\psi(x)\in g\inv_ip$ for some $i$.\claim{4}\medskip

Now fix $\alpha\in E_{\phi}$. Let $V\seq H_\delta$ be open, with $\alpha\in V$. Since $\alpha\in E_{\phi}$, there are $q,q'\in S^\alpha_{\delta^r}(G)$ such that $\phi(x)\in q$ and $\neg\phi(x)\in q'$. Let $S=\pi_\delta\inv(V)$, and note that $S\seq S_{\delta^r}(G)$ is open, with $\pi_\delta\inv(\alpha)\seq S$. In particular, $q\in S\cap \phi(x)$ and $q'\in S\cap\neg\phi(x)$, and so these are are nonempty open sets in $S_{\delta^r}(G)$. By Claim 4, there are $g,g'\in G$ such that $gp\in S\cap\phi(x)$ and $g'p\in S\cap\neg \phi(x)$. Since $p$ concentrates on $G^{00}_{\delta^r}$, we have $\pi_\delta(gp)\in V\cap U^p_{\phi}$ and $\pi_\delta(g'p)\in V\cap\neg U^p_{\phi}$. Altogether, $\alpha\in \partial U^p_{\phi}$. 
\end{proof}

Generic compact domination can be used to derive stronger uniqueness statement for Keisler measures on $\delta^r$-formulas. In particular, we say that a finitely additive probability measure $\nu$ on $\delta^r$-formulas \emph{lifts $\eta_\delta$} if $\eta_\delta(W)=\nu(\pi_\delta\inv(W))$ for any Borel set $W\seq H_\delta$ (as usual, we identify $\nu$ with a regular Borel probability measure on $S_{\delta^r}(G)$). For the rest of this section, we view the pseudofinite counting measure $\mu$ as restricted to $\delta^r$-formulas. Using left-invariance, one sees  that $\mu$ lifts $\eta_\delta$.

\begin{corollary}\label{cor:GCD}
If $\nu$ is a left $G^{00}_{\delta^r}$-invariant Keisler measure on $\delta^r$-formulas, and $\nu$ lifts $\eta_\delta$, then $\nu=\mu$. 
\end{corollary}
\begin{proof}
In addition to Theorem \ref{thm:GCD}, the other main ingredient of the proof is that $\nu(\phi(x))=0$ for any non-generic $\delta^r$-formula $\phi(x)$. So let us first assume this is true, and prove the result. Under this assumption, we have $\nu(W)=\nu(S^g_{\delta^r}(G)\cap W)$ for any Borel $W\seq S_{\delta^r}(G)$ (as usual, we view $\nu$ as a regular Borel probability measure on $S_{\delta^r}(G)$). Given a $\delta^r$-formula $\phi(x)$, define $S_\phi=S^g_{\delta^r}\cap\phi(x)$ and $Z_\phi=\pi_\delta\inv(\pi_\delta(S_\phi))\backslash S_\phi$. One easily checks that $\pi_\delta(S^g_{\delta^r}(G)\cap Z_\phi)\seq E_\phi$ and so, since $\nu$ lifts $\eta_\delta$, we have $\nu(Z_\phi)=\nu(S^g_{\delta^r}(G)\cap Z_\phi)\leq \eta_\delta(E_\phi)=0$. Therefore
\[
\nu(\phi(x))=\nu(S_\phi)+\nu(Z_\phi)=\nu(\pi_\delta\inv(\pi_\delta(S_\phi))=\eta_\delta(\pi_\delta(S_\phi)).
\]
So $\nu$ is uniquely determined by $\eta_\delta$. 

To finish the proof, we fix a $\delta^r$-formula $\phi(x)$, with $\nu(\phi(x))>0$, and show that $\phi(x)$ is generic. This argument is extracted from results in \cite{ChSi}. First, we claim that $\phi(x)$ \emph{does not $G$-divide}, i.e., there is no integer $k\geq 1$ and sequence $(b_i)_{i<\omega}$ such that $\{\phi^\ell(x;b_i):i<\omega\}$ is $k$-inconsistent (see \cite[Definition 3.2]{ChSi}). Indeed, if $\phi(x)$ does $G$-divide then for any bounded cardinal $\lambda$, we can find an integer $k\geq 1$ and a sequence $(b_i)_{i<\lambda}$ such that $\{\phi^\ell(x;b_i):i<\lambda\}$ is $k$-inconsistent. Choose $\lambda$ large enough so that, by Erd\"{o}s-Rado, any $[G:G^{00}_{\delta^r}]$-coloring of pairs from $\lambda$ admits an infinite monochromatic subset. After passing to a subsequence, we obtain $(b_i)_{i<\omega}$ and $a\in G$ such that $p:=\{\phi^\ell(x;b_i):i<\omega\}$ is $k$-inconsistent and $ab_ib_j\inv\in G^{00}_{\delta^r}$ for all $i<j$. Now define the  measure $\nu_0$ such that $\nu_0(\psi(x))=\nu(\psi(b_0\inv a\inv x))$ for any $\delta^r$-formula $\psi(x)$. Since $\nu$ is $G^{00}_{\delta^r}$-invariant, we have $\nu_0(\phi^\ell(x;b_i))=\nu(\phi(x))>0$ for all $i>0$. This contradicts $k$-inconsistency of $p$ by a standard exercise on finitely additive probability measures. 

We now apply   \cite[Proposition 3.33]{ChSi} and \cite{NewTD} (see also \cite[Fact 3.29]{ChSi}) to conclude that $\phi(x)$ is \emph{weakly generic}, i.e., there are $g_1,\ldots,g_n\in G$ such that, if $\theta(x):=\bigvee_{i=1}^n\phi(g_ix)$, then $\neg\theta(x)$ is not generic (\cite[Proposition 3.33]{ChSi} is stated for NIP theories, but the proof only uses that $\phi^\ell(x;y)$ is NIP). So $\theta(x)$ is generic by Corollary \ref{cor:genideal}, which implies $\phi(x)$ is generic, as desired. 
\end{proof}
 
 Since any left-invariant Keisler measure on $\delta^r$-formulas lifts $\eta_\delta$, the previous corollary gives a second proof that $\mu$ is the unique such measure (which also follows from Theorem \ref{thm:unique} and Proposition \ref{prop:deltarfacts}$(a)$). 

\begin{remark}
Let $\nu$ be a Keisler measure on $\delta^r$-formulas. Then the following are equivalent.
\begin{enumerate}[$(i)$]
\item $\nu$ is left $G^{00}_{\delta^r}$-invariant.
\item $\nu(\phi(gx)\smd\phi(x))=0$ for any $\delta^r$-formula $\phi(x)$ and $g\in G^{00}_{\delta^r}$.
\item $\nu(\phi(x))>0$ for any generic $\delta^r$-formula $\phi(x)$. 
\end{enumerate}
Indeed $(ii)\Rightarrow(i)$ is clear, $(i)\Rightarrow(iii)$ was shown in Corollary \ref{cor:GCD}, and $(iii)\Rightarrow(ii)$ follows from Theorem \ref{thm:G00}$(d)$.
\end{remark}

\bibliographystyle{amsplain}
\end{document}